\newtheorem{theorem}{Theorem}[section]
\theoremstyle{definition}
\newtheorem{definition}[theorem]{Definition}
\newtheorem{example}[theorem]{Example}
\theoremstyle{remark}
\newtheorem{remark}[theorem]{Remark}
\numberwithin{equation}{section}
\begin{document}
\setcounter{page}{1}

\title[ Besov continuity for pseudo-differential operators]{ Besov continuity for pseudo-differential operators on compact homogeneous manifolds}

\author[D. Cardona]{Duv\'an Cardona}
\address{
  Duv\'an Cardona:
  \endgraf
  Department of Mathematics  
  \endgraf
  Pontificia Universidad Javeriana
  \endgraf
  Bogot\'a
  \endgraf
  Colombia
  \endgraf
  {\it E-mail address} {\rm cardonaduvan@javeriana.edu.co;
duvanc306@gmail.com}
  }


\dedicatory{}

\subjclass[2010]{19K56; Secondary 58J20, 43A65.}

\keywords{Besov space, Compact homogeneous manifold, Pseudo-differential operators, Global analysis}

\date{Received: xxxxxx;  Revised: yyyyyy; Accepted: zzzzzz.
\newline \indent }

\begin{abstract}
In this paper we study the Besov continuity  of pseudo-differential operators on compact homogeneous manifolds $M=G/K.$ We use the global quantization of these operators in terms of the representation theory of  compact homogeneous manifolds.\\
\textbf{MSC 2010.} Primary { 19K56; Secondary 58J20, 43A65}.
\end{abstract} \maketitle

\tableofcontents

\section{Introduction}

In this work we study the mapping properties of pseudo-differential operators on Besov spaces defined on compact Lie groups. The Besov spaces $B^r_{p,q}$ arose from attempts to unify the various definitions of
several fractional-order Sobolev spaces. In order to illustrate the mathematical relevance of the Besov spaces, we recall that 
from the context of the applications, a function belonging to some of these spaces admits a  decomposition of the form
\begin{equation}
f=f_0+\sum_{j\geq 1}g_j, \,\,\,g_j=f_{j+1}-f_{j},
\end{equation}
satisfying
\begin{equation}
\Vert f\Vert_{B^r_{p,q}}:=\Vert \{2^{jr} \Vert g_j \Vert_{L^p}\}_{j=0}^{\infty} \Vert_{l^q(\mathbb{N})}<\infty,\,\,\,g_0=f_0,\,0<p,q\leq \infty,\,r\in\mathbb{R},
\end{equation}
where  the sequence $(f_j)_{j}$ consists of approximations to the data (or the unknown) $f$ of a given problem at various levels of resolution indexed by $j.$    In practice such approximations can be defined by using the Fourier transform and this description is useful in the numerical analysis of wavelet methods, and some areas of applied mathematics as signal analysis and image processing. In the field of numerical analysis, multi-scale and wavelet decompositions, Besov spaces have been used for three main task: preconditioning large systems arising from the discretization of elliptic differential problems, adaptive approximations of functions which are not smooth, and sparse representation of initially full matrices arising in the discretization of integral equations (see A. Cohen \cite{Cohen}).  \\
\\
In our work, we are interesting in the study of pseudo-differential problems associated to Besov spaces on compact Lie groups and more generally compact homogeneous manifolds. More precisely, we want to study the action of pseudo-differential operators on these spaces. We will use the formulation of  Besov spaces on compact homogeneous manifolds in terms of representation theory as in (\cite{RuzBesov}).  Our main goal is to show that, under certain conditions, the $L^{p}$ boundedness of Fourier multipliers on compact homogeneous manifolds gives to rise to results of continuity for pseudo-differential operators on Besov spaces. In our analysis we use the theory of global pseudo-differential operators on compact Lie groups and on compact homogeneous manifolds, which was initiated in the PhD thesis of  V. Turunen and  was extensively developed by M. Ruzhansky and V. Turunen in \cite{Ruz}. In this theory, every operator $A$ mapping $C^{\infty}(G)$ itself, where $G$ is a compact Lie group, can be described in terms of  representations of $G$ as follows. Let $\widehat{G}$ be the unitary dual of $G$ (i.e, the set of equivalence classes of continuous irreducible  unitary representations on $G$), the Ruzhansky-Turunen approach establish that  $A$ has associated a {\em matrix-valued global (or full) symbol} $\sigma_{A}(x,\xi)\in \mathbb{C}^{d_\xi \times d_\xi},$  $[\xi]\in \widehat{G},$ on the non-commutative phase space $G\times\widehat{G}$ satisfying
\begin{equation}
\sigma_A(x,\xi)=\xi(x)^{*}(A\xi)(x):=\xi(x)^{*}[A\xi_{ij}(x)]_{i,j=1}^{d_\xi}.
\end{equation}
Then it can be shown that the operator $A$ can be expressed in terms of such a symbol as \cite{Ruz}
\begin{equation}\label{mul}Af(x)=\sum_{[\xi]\in \widehat{G}}d_{\xi}\text{Tr}[\xi(x)\sigma_A(x,\xi)\widehat{f}(\xi)]. 
\end{equation}  
In the last five years, applications of this theory have been considered by many authors.  Advances in this framework includes the characterization of H\"ormander classes $S^{m}_{\rho,\delta}(G)$ on compact Lie groups in terms of the representation theory of such groups (c.f \cite{RWT} ), the sharp G\"arding inequality on compact Lie groups, (c.f \cite{RT0}), the behavior of Fourier multipliers in $L^{p}(G)$ spaces (c.f. \cite{Ruz3}), global functional calculus of operator on Lie groups (c.f \cite{RW2}), $r$-nuclearity of operators,  Grothendieck-Lidskii formula and nuclear traces of operators on compact Lie groups (c.f. \cite{DR,DR1,DR3}), the Gohberg lemma, characterization of compact operators, and the essential spectrum of operators on $L^2$ (c.f \cite{DAR}), $L^{p}$-boundedness of pseudo-differential operators in H\"ormander classes (c.f. \cite{DR4}), Besov continuity and nuclearity of Fourier multipliers on compact Lie groups (c.f \cite{Cardona,cardona2,Cardona3}), diffusive wavelets on groups and homogeneous spaces \cite{EW}, and recently, a reformulation of Ruzhansky and Turunen approach on the pseudo-differential calculus in compact Lie groups (c.f. Fischer, V \cite{Fischer}), including a version of the Calder\'on-Vaillancourt Theorem in this framework. \\

In the euclidean case of $\mathbb{R}^{n},$ the H\"ormander's symbol class $S^{m}_{\rho,\delta}(\mathbb{R}^n),$ $m\in\mathbb{R}$ and $0\leq \delta, \rho\leq 1,$ is defined by those functions $a(x,\xi),$ $x,\xi\in\mathbb{R}^n$ satisfying
\begin{equation}
|\partial_{x}^{\beta}\partial_{\xi}^{\alpha}a(x,\xi)|\leq C_{\alpha,\beta}\langle \xi\rangle^{m-\rho|\alpha|+\delta|\beta|}, \,\,\,\alpha,\beta\in\mathbb{N}^{n},
\end{equation}
$\langle\xi \rangle=(1+|\xi|)^{2}.$ The corresponding pseudo-differential operator $A$ with symbol $a(\cdot,\cdot)$ is defined on the Schwartz space $\mathscr{S}(\mathbb{R}^{n})$ by
\begin{equation}\label{pseudorn}
Af(x)=\int e^{i2\pi x\cdot \xi}a(x,\xi)\widehat{f}(\xi)d\xi.
\end{equation}
Consequently, on every differential manifold $M,$ pseudo-differential operators $A:C^{\infty}(M)\rightarrow C^{\infty}(M)$ associated to H\"ormander classes $S^{m}_{\rho,\delta}(M),$ $0\leq \delta<\rho \leq 1,$  $\rho\geq 1-\delta$ can be defined by the use of coordinate charts. When $M=G$ is a compact Lie group and $1-\rho\leq \delta,$ the exceptional results in \cite{RWT} gives an equivalence of the H\"ormander classes defined by charts and H\"ormander classes defined in terms of the representation theory of the group $G$.  \\

If $K$ is a closed subgroup of a compact Lie group $G,$ there is a canonical way to identify the quotient space $M=G/K$ with a analytic manifold.  Besov spaces on compact Lie groups and general compact homogeneous manifolds where introduced in terms of representations and analyzed  in \cite{RuzBesov}, they form scales $B^r_{p,q}(M)$ carrying three indices $r\in\mathbb{R},$ $0<p,q\leq \infty.$ For $1\leq p<\infty,$ $1\leq q\leq \infty$ the Besov spaces $B^{r}_{p,q}(M)$ coincide with the Besov spaces defined trough of localization with the euclidean space $B^{r}_{p,q}(\mathbb{R}^n).$ It is well known that if $a\in S^{m}_{1,\delta}(\mathbb{R}^n),$ $0\leq \delta <1,$ then the corresponding operator $A:B^{r+m}_{p,q}(\mathbb{R}^n)\rightarrow B^{r}_{p,q}(\mathbb{R}^n)$ is bounded for $1<p<\infty,$ $1\leq q<\infty$ and $r\in\mathbb{R},$ (c.f Bordaud \cite{Bordaud}, and Gibbons \cite{Gibbons}).  This implies the same result for compact Lie groups when $1< p<\infty$ and $1\leq q\leq \infty.$ With this fact in mind, in order to obtain Besov continuity for operators, we concentrate our attention to pseudo-differential operators $A$ whose symbols $a=\sigma_{A}$ have limited regularity almost in one of the variables $x,\xi.$ (Since, $\xi$ in the case of compact Lie groups has a discrete nature, the notion of differentiation is related with difference operators). The results of this paper have been announced in \cite{cardona2}. The Besov continuity of multipliers in the context of graded Lie groups has been considered by the author and M. Ruzhansky in \cite{CardonaRuzhansky}.\\

This paper is organized as follows. In section \ref{mainresults} we present and briefly discuss our main theorems. In Section \ref{Preliminaries}, we summarizes basic properties on the harmonic analysis in compact Lie groups including the Ruzhansky-Turunen theory of global pseudo-differential operators on compact-Lie groups and the definition of Besov spaces on such groups. Finally, in section \ref{proof} we proof our results on the boundedness of invariant and non-invariant pseudo-differential operators on Besov spaces and some examples on differential problems are given in Subsection \ref{examples}.
\section{Main results}\label{mainresults}
\noindent In this section we present and briefly discuss our main theorems. The following is a generalization of Theorem 1.2 of \cite{Cardona} to the case of homogeneous compact manifolds. 
\begin{theorem}\label{T1}
Let $M:=G/K$ be a compact  homogeneous manifold and let  $A=\text{Op}(\sigma)$ be a Fourier multiplier on $M$. If $A$ is bounded from $L^{p_1}(M)$ into $L^{p_2}(M),$ then $A$ extends to a bounded operator from $B^{r}_{p_{1},q}(M)$ into $B^{r}_{p_{2},q}(M),$ for all $r\in\mathbb{R},$  $1\leq p_1,p_2\leq \infty,$ and $0<q\leq \infty.$
\end{theorem}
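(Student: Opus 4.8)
The plan is to reduce the statement to the defining Littlewood--Paley decomposition of the Besov norm and then to exploit the single structural fact that distinguishes a Fourier multiplier: it commutes with the spectral building blocks of that decomposition.

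First I would recall the characterization of $B^{r}_{p,q}(M)$ established in \cite{RuzBesov}. Writing $\{S_{j}\}_{j\geq 0}$ for the dyadic spectral projections attached to the Laplacian (equivalently, the Casimir operator) on $M$, one has
\begin{equation}
\|f\|_{B^{r}_{p,q}(M)} \asymp \left\| \{ 2^{jr}\|S_{j}f\|_{L^{p}(M)} \}_{j\geq 0} \right\|_{\ell^{q}}.
\end{equation}
Each $S_{j}$ is itself a Fourier multiplier on $M$, i.e.\ it acts diagonally on the representation side of the quantization \eqref{mul}. This is the translation of the heuristic decomposition $f=f_{0}+\sum_{j\geq 1}g_{j}$ from the introduction into the representation-theoretic framework of $G/K$.

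The key step is the observation that $A=\mathrm{Op}(\sigma)$, being a Fourier multiplier, has a symbol depending only on $\xi$; hence both $A$ and every $S_{j}$ act as multiplication by matrix-valued functions of $\xi$ alone on $\widehat{f}(\xi)$. Since such operators are simultaneously diagonalized on the Fourier side, they commute:
\begin{equation}\label{commute}
S_{j}A = A S_{j}, \qquad j\geq 0.
\end{equation}
With \eqref{commute} in hand the conclusion is immediate: applying the hypothesis $\|Ag\|_{L^{p_{2}}}\leq C\|g\|_{L^{p_{1}}}$ termwise with $g=S_{j}f$,
\begin{align}
\|Af\|_{B^{r}_{p_{2},q}} &\asymp \left\| \{ 2^{jr}\|S_{j}Af\|_{L^{p_{2}}} \}_{j} \right\|_{\ell^{q}} = \left\| \{ 2^{jr}\|A S_{j}f\|_{L^{p_{2}}} \}_{j} \right\|_{\ell^{q}} \nonumber \\
&\leq C \left\| \{ 2^{jr}\|S_{j}f\|_{L^{p_{1}}} \}_{j} \right\|_{\ell^{q}} \asymp C\|f\|_{B^{r}_{p_{1},q}}.
\end{align}

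The main obstacle I anticipate lies not in the estimate itself but in justifying the commutation \eqref{commute} within the homogeneous-space setting: one must verify that the Fourier analysis on $M=G/K$ restricts the quantization \eqref{mul} to the class-$1$ (spherical) representations relative to $K$, and that both $A$ and the projections $S_{j}$ respect this restricted collection and act on the common representation data, so that no contribution from non-spherical representations spoils the diagonalization. Once this bookkeeping is settled, the termwise bound above is an a priori estimate on trigonometric polynomials, which are dense in $B^{r}_{p_{1},q}(M)$ for $q<\infty$, yielding the asserted bounded extension; the limiting cases $p_{2}=\infty$ and $q=\infty$ follow from the same inequality interpreted with the supremum norm.
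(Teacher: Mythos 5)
Your proposal is correct and takes essentially the same route as the paper: the paper's proof likewise unfolds the Besov norm into the dyadic pieces $\mathrm{Op}(\chi_m)f$, uses that $\chi_m(\xi)\sigma(\xi)\widehat{f}(\xi)=\sigma(\xi)\chi_m(\xi)\widehat{f}(\xi)$ to commute the projection with the multiplier, and applies the $L^{p_1}\to L^{p_2}$ hypothesis termwise, handling $q=\infty$ by the same computation with a supremum. One phrasing to tighten: the commutation $S_jA=AS_j$ does not follow from two matrix-valued multipliers being ``simultaneously diagonalized'' (two symbols $\sigma(\xi),\tau(\xi)\in\mathbb{C}^{d_\xi\times d_\xi}$ need not commute), but from the fact that the symbol of $S_j$ is the scalar cut-off $\chi_m(\xi)$ times the truncated identity $I_\xi$, which commutes with any multiplier symbol supported on the class-one block $\sigma(\xi)_{ij}=0$ for $i>k_\xi$ or $j>k_\xi$ --- precisely the bookkeeping on $\widehat{G}_0$ that you correctly flag as the point needing verification.
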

As a consequence of this fact, we establish the following theorems. First, we present a theorem on the boundedness of operators on compact homogeneous spaces.
\begin{theorem}\label{T2}
Let us consider $A:C^{\infty}(G/K)\rightarrow C^{\infty}(G/K) $ be a pseudo-differential operator on the compact homogeneous manifold $G/K.$ Let $n=\dim(G/K)$ and $1<p_1\leq  2\leq p_2<\infty.$ Let us assume that the (global)  matrix valued symbol $a(x,\pi)$ of $A$ satisfies in terms of the Plancherel measure $\mu$ of $\widehat{G}_0$ the inequality,
\begin{equation}\label{RAN}
\sup_{s>0}s[\mu\{ \pi\in\widehat{G}_{0}: \Vert \partial_{x}^{\beta}a(x,\pi) \Vert_{op}>s\}]^{\frac{1}{p_1}-\frac{1}{p_2}}<\infty,
\end{equation}
for all $|\beta|\leq [\frac{n}{p_1}]+1.$ Then $A$ extends to a bounded operator from $B^{r}_{p_1,q}(G/K)$ into $B^{r}_{p_2,q}(G/K),$  for all $r\in\mathbb{R}$ and $0<q\leq \infty.$
\end{theorem}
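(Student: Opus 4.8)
The plan is to derive Theorem \ref{T2} from Theorem \ref{T1} by freezing the spatial variable and reducing the genuine (non-invariant) operator $A$ to a family of Fourier multipliers. First I would observe that \eqref{RAN} is exactly the statement that, for each fixed $x$ and each multi-index $\beta$, the frozen symbol $\partial_x^\beta a(x,\cdot)$ lies in the weak-Lebesgue space $L^{r,\infty}(\widehat{G}_0,\mu)$ with $\frac{1}{r}=\frac{1}{p_1}-\frac{1}{p_2}$, uniformly in $x$. Under the hypothesis $1<p_1\le 2\le p_2<\infty$ this is precisely the setting of the Hausdorff--Young--Paley (H\"ormander-type) $L^{p_1}$-$L^{p_2}$ multiplier theorem on $G/K$; applying it to each frozen symbol yields that the Fourier multipliers $A^{(\beta)}_{x}:=\text{Op}(\partial_x^\beta a(x,\cdot))$ are bounded from $L^{p_1}(G/K)$ into $L^{p_2}(G/K)$ with norm independent of $x$, for all $|\beta|\le[\frac{n}{p_1}]+1$.

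Second, since each $A^{(\beta)}_{x}$ is an $x$-independent Fourier multiplier that is $L^{p_1}$-$L^{p_2}$ bounded, Theorem \ref{T1} promotes it to a bounded operator $A^{(\beta)}_{x}:B^{r}_{p_1,q}(G/K)\to B^{r}_{p_2,q}(G/K)$, still uniformly in $x$ and valid for every $r\in\mathbb{R}$ and $0<q\le\infty$. This is the step that makes Theorem \ref{T2} a genuine consequence of Theorem \ref{T1}.

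Third, I would reassemble $A$ from these frozen multipliers using the spatial regularity encoded by the order $[\frac{n}{p_1}]+1$. The cleanest heuristic is the diagonal identity $Af(x)=g(x,x)$ with $g(x',x):=(A_{x'}f)(x)$ and $A_{x'}=\text{Op}(a(x',\cdot))$: the Sobolev embedding $W^{[\frac{n}{p_1}]+1,p_1}(G/K)\hookrightarrow L^\infty(G/K)$ in the variable $x'$, applied to $x'\mapsto g(x',x)$ and combined with $\partial_{x'}^\beta g(x',x)=(A^{(\beta)}_{x'}f)(x)$, controls the diagonal, so that Minkowski's integral inequality (admissible because $p_1\le p_2$) together with the uniform bounds of the first step closes the $L^{p_1}$-$L^{p_2}$ estimate. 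To obtain the Besov estimate one instead expands the symbol in a Fourier series in $x$, $a(x,\pi)=\sum_{[\eta]}d_\eta\,\text{Tr}\big(\eta(x)\widehat{a}(\eta,\pi)\big)$, which writes $A$ as a superposition $A=\sum_{[\eta]}\sum_{k,l}\eta_{kl}\cdot B_{\eta,l,k}$ of multiplications by the smooth functions $\eta_{kl}$ composed with the Fourier multipliers $B_{\eta,l,k}=\text{Op}\big(\widehat{a}(\eta,\cdot)_{lk}\big)$; each multiplier is Besov bounded by the second step, each multiplication is bounded on $B^{r}_{p_2,q}(G/K)$, and the quasi-triangle inequality reduces matters to absolute convergence of the series.

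The main obstacle is precisely this last summation. The diagonal/Sobolev scheme, although it explains the appearance of the critical order $[\frac{n}{p_1}]+1$, relies on a pointwise domination that is incompatible with the Besov quasi-norm (the inequality $|F_1|\le F_2$ does not transfer to $B^{r}_{p,q}$), so the Besov estimate must be carried out through the spatial Fourier expansion instead. There the difficulty is to balance the growth of the dimensions $d_\eta$ and of the Besov operator norms of multiplication by the eigenfunctions $\eta_{kl}$ against the decay of the Fourier coefficients $\widehat{a}(\eta,\cdot)$ supplied by the $x$-regularity; the Bernstein-type absolute-convergence threshold for this series is exactly $[\frac{n}{p_1}]+1>\frac{n}{p_1}$ spatial derivatives, matching the hypothesis, and one uses the Littlewood--Paley description of $B^{r}_{p,q}(G/K)$ from \cite{RuzBesov} to keep the per-term bounds uniform.
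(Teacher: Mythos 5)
Your Steps 1 and 2 coincide exactly with the paper's own proof: one freezes the spatial variable, applies the $L^{p_1}$--$L^{p_2}$ theorem of \cite{RR} (Theorem \ref{t1}) to each frozen multiplier $\text{Op}(\partial_z^{\beta}a(z,\cdot))$, whose symbol satisfies \eqref{RAN} uniformly in $z$ for $|\beta|\leq l=[\frac{n}{p_1}]+1$, and then promotes these to uniform $B^{r}_{p_1,q}\to B^{r}_{p_2,q}$ bounds via Theorem \ref{T1}. The gap is in Step 3, and it begins with a misdiagnosis. You discard the diagonal/Sobolev scheme on the grounds that pointwise domination is incompatible with the Besov quasi-norm; but the Besov quasi-norm of $Af$ is an $\ell^{q}$ norm of the genuine $L^{p_2}$ norms of the dyadic blocks $\text{Op}(\chi_m)Af$, and the paper applies the domination \emph{block by block}: since $\text{Op}(\chi_m)$ acts only in the frequency variable, it commutes with the freezing, so for each fixed $m$ one has $|\text{Op}(\chi_m)Af(x)|\leq \sup_{z\in M}|\text{Op}(\chi_m)A_{z}f(x)|$, and the Sobolev embedding in the frozen variable $z$ (with $l=[\frac{n}{p_1}]+1>\frac{n}{p_1}\geq \frac{n}{p_2}$ derivatives, exactly the order in the hypothesis) converts this supremum into the frozen multipliers $\text{Op}(\partial_z^{\beta}a(z,\cdot))$ \emph{inside} each $L^{p_2}$ norm, after which your Steps 1--2 conclude. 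Pointwise monotonicity is only ever invoked inside an $L^{p_2}$ norm, where it is valid; the obstacle around which you organized Step 3 does not exist, and the paper's proof is precisely the scheme you set aside, run on the Littlewood--Paley pieces rather than on $Af$ as a whole.

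The substitute route you propose instead --- expanding $a(x,\pi)=\sum_{[\eta]}d_{\eta}\text{Tr}(\eta(x)\widehat{a}(\eta,\pi))$ and summing $A=\sum_{[\eta]}\sum_{k,l}\eta_{kl}\cdot B_{\eta,l,k}$ absolutely --- cannot close under the stated hypothesis, so as written Step 3 fails. With only $l=[\frac{n}{p_1}]+1$ spatial derivatives available, integration by parts yields decay of order $\langle\eta\rangle^{-l}$ for $\Vert\widehat{a}(\eta,\cdot)\Vert$; but by the Weyl-type counting $\sum_{\langle\eta\rangle\leq\lambda}d_{\eta}^{2}\asymp\lambda^{n}$, absolute convergence of the double series requires decay strictly faster than $\langle\eta\rangle^{-n}$, and more still to absorb the operator norms of multiplication by the coefficients $\eta_{kl}$ on $B^{r}_{p_2,q}$, which for $r\neq 0$ grow like $\langle\eta\rangle^{|r|}$ since multiplication by a matrix coefficient of $\eta$ shifts frequencies by amounts of size $\langle\eta\rangle$. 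Since $p_1>1$ forces $l\leq n$, the series is never absolutely summable in the regime of the theorem; your claimed ``Bernstein threshold $[\frac{n}{p_1}]+1>\frac{n}{p_1}$'' is the threshold for the Sobolev embedding used in the freezing argument, not for summability of the symbol's spatial Fourier series against the weights $d_{\eta}$. The correct completion of your outline is therefore the blockwise freezing argument: dominate each dyadic block pointwise by the supremum over the frozen variable, apply the Sobolev embedding in that variable, interchange with the $\ell^{q}$ summation in $m$, and invoke the uniform bounds $\sup_{|\beta|\leq l,\,z\in M}\Vert\text{Op}(\partial_z^{\beta}a(z,\cdot))\Vert_{B(B^{r}_{p_1,q},B^{r}_{p_2,q})}<\infty$ that your own Steps 1--2 already furnish.
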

It is important to mention that the Theorem above can be obtained as consequence of Theorem \ref{T1} and the results in \cite{RR}. 
\begin{remark}
A classical result by H\"ormander (Theorem 1.11 of \cite{Ho}) establish the boundedness of a Fourier multiplier of the form \eqref{pseudorn}  from $L^{p_1}(\mathbb{R}^n)$ into $L^{p_2}(\mathbb{R}^n)$ if its symbol $a(x,\xi):=a(\xi)$ satisfies the relation
\begin{equation}
\sup_{s>0}s[\mu\{ \xi\in\mathbb{R}^n: | a(\xi) |>s\}]^{\frac{1}{p_1}-\frac{1}{p_2}}<\infty,
\end{equation}
where $\mu$ is the Lebesgue measure and $1<p_1\leq 2\leq p_2<\infty.$ For compact homogeneous manifolds $M:=G/K,$ Ruzhansky, Akylzhanov and Nursultanov \cite{RR} have obtained the boundedness from $L^{p_1}(G)$ into $L^{p_2}(G)$ of  pseudo-differential operators $A$ with symbols satisfying the condition \eqref{RAN} and $1<p_1\leq 2\leq p_2<\infty$. (See also the references \cite{RR3} and \cite{RR2}). 
\end{remark}
Now for the case of compact Lie groups, which are important cases of homogeneous manifolds, we have the following  theorems on boundedness of operators associated to symbols satisfying conditions of H\"ormander type.
\begin{theorem}\label{T3}
Let $G$ be a compact Lie group, $n=\dim (G)$ and let $0\leq \rho,\delta \leq 1.$ Denote by $\varkappa$ the smallest even integer larger that $\frac{n}{2}.$ Let $1<p<\infty$ and $l=[\frac{n}{p}]+1. $ Let $A$ from $C^{\infty}(G)$ into $C^{\infty}(G)$ be a pseudo-differential operator with global symbol $a(x,\xi)$ satisfying
\begin{equation}\label{cardonacondition''}
\Vert \mathbb{D}_{\xi}^{\alpha}\partial_{x}^{\beta}a(x,\xi) \Vert_{op}\leq C_{\alpha,\beta}\langle \xi\rangle^{-m-\rho|\alpha|+\delta|\beta|}, \,\,|\alpha|\leq \varkappa,|\beta|\leq l,
\end{equation}
with $m\geq \varkappa(1-\rho)|\frac{1}{p}-\frac{1}{2}|+\delta l.$ Then $A$ extends to a bounded operator from $B^{r}_{p,q}(G)$ into $B^{r}_{p,q}(G)$ for all $0< q\leq \infty$ and $r\in\mathbb{R}.$ Moreover, if we assume that
\begin{equation}\label{cardonacondition'}
\Vert \mathbb{D}_{\xi}^{\alpha}\partial_{x}^{\beta}a(x,\xi) \Vert_{op}\leq C_{\alpha,\beta}\langle \xi\rangle^{-\rho|\alpha|}, \,\,|\alpha|\leq \varkappa,|\beta|\leq l,
\end{equation}  then $A$ extends to a bounded operator from $B^{r+ \varkappa(1-\rho)|\frac{1}{p}-\frac{1}{2}|  }_{p,q}(G)$  into $B^{r}_{p,q}(G)$  for all $1<p<\infty,$ $0<q\leq\infty$ and $r\in\mathbb{R}.$
\end{theorem}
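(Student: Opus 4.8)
The plan is to deduce both statements from the Fourier-multiplier result of Theorem \ref{T1} by freezing the spatial variable, the role of the threshold $l=[\tfrac np]+1$ being to activate the spatial Sobolev embedding $W^{l,p}(G)\hookrightarrow L^{\infty}(G)$ (valid since $l>\tfrac np$) and the role of $\varkappa$ being to feed the H\"ormander--Mihlin multiplier theorem in the dual variable.

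First I would reduce the second assertion to the first. Fix $s=\varkappa(1-\rho)|\tfrac1p-\tfrac12|$ and let $\Lambda^{-s}$ be a fixed operator of order $-s$ (with symbol comparable to $\langle\xi\rangle^{-s}$); it is an isomorphism $B^{r}_{p,q}(G)\to B^{r+s}_{p,q}(G)$, so to bound $A\colon B^{r+s}_{p,q}\to B^{r}_{p,q}$ it suffices to bound $A\Lambda^{-s}$ on $B^{r}_{p,q}$. By the composition formula of the symbolic calculus on compact Lie groups (\cite{Ruz,RWT}), the matrix symbol of $A\Lambda^{-s}$ obeys $\|\mathbb D_\xi^\alpha\partial_x^\beta(\cdot)\|_{op}\lesssim\langle\xi\rangle^{-s-\rho|\alpha|}$ for $|\alpha|\le\varkappa$, $|\beta|\le l$; this is exactly hypothesis \eqref{cardonacondition''} with $\delta=0$ and $m=s$, and the required inequality $m\ge\varkappa(1-\rho)|\tfrac1p-\tfrac12|+\delta l$ holds with equality. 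Hence the second part follows once the first is proved.

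For the first assertion the difficulty is that $A$ is non-invariant and does not commute with the Littlewood--Paley projections $P_j$, so Theorem \ref{T1} does not apply to $A$ directly. I would therefore freeze the base point: for $y\in G$ and $|\beta|\le l$ set $A^{(\beta)}_y:=\mathrm{Op}\big(\partial_y^{\beta}a(y,\cdot)\big)$, the Fourier multiplier obtained by freezing $x=y$ in $\partial_x^\beta a$, so that $Af(x)=(A^{(0)}_x f)(x)$. Each frozen symbol satisfies $\|\mathbb D_\xi^\alpha\partial_y^\beta a(y,\xi)\|_{op}\le C\langle\xi\rangle^{-(m-\delta|\beta|)-\rho|\alpha|}$ for $|\alpha|\le\varkappa$, uniformly in $y$, and since $m-\delta|\beta|\ge m-\delta l\ge\varkappa(1-\rho)|\tfrac1p-\tfrac12|$, the H\"ormander--Mihlin theorem for $\rho$-type symbols on $G$ (the $L^p$ estimates of \cite{DR4,Ruz3}) gives $\sup_{y}\|A^{(\beta)}_y\|_{L^p\to L^p}<\infty$. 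Because each $A^{(\beta)}_y$ is a genuine multiplier, Theorem \ref{T1} upgrades this to uniform Besov bounds $\sup_{y}\|A^{(\beta)}_y\|_{B^{r}_{p,q}\to B^{r}_{p,q}}<\infty$ for every $r$ and $q$.

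It then remains to assemble the frozen estimates into a bound for $A$ itself. The mechanism is that $Af$ is the diagonal restriction of $(x,y)\mapsto(A^{(0)}_yf)(x)$, and the Sobolev embedding in the frozen variable $y$ lets one dominate the diagonal by an average of the $A^{(\beta)}_y$, $|\beta|\le l$: concretely one inserts $f=\sum_k P_kf$, estimates the pieces $P_jAP_kf$ using that $P_jA^{(\beta)}_yP_k$ is a product of commuting multipliers (hence vanishes off the diagonal $|j-k|\le 1$), and controls the error coming from the $y$-variation of $A_y$ by $l$ spatial derivatives. This is the step I expect to be the genuine obstacle: one must produce a summable off-diagonal estimate for $\|P_jAP_k\|_{L^p\to L^p}$, uniform in the frozen variable, and verify that $l=[\tfrac np]+1$ derivatives together with the order gain encoded in $m$ suffice to close the sum against the weights $2^{jr}$ for every $r\in\mathbb R$. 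Granting this, a weighted Schur/Young estimate in the indices $j,k$ combined with the uniform frozen bounds of the previous paragraph yields $\|Af\|_{B^{r}_{p,q}}\lesssim\|f\|_{B^{r}_{p,q}}$ for all $r\in\mathbb R$ and $0<q\le\infty$, completing the first part and, with it, the theorem.
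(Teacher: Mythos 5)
Your proof of the first (and core) assertion has a genuine gap at exactly the point you flag, and the program you sketch to fill it would not close. You leave the assembly of the frozen estimates as ``granting this,'' so what you have is an outline, not a proof. Moreover, the Littlewood--Paley scheme you propose is the wrong mechanism here: since the projections $P_j=\mathrm{Op}(\chi_j)$ are sharp spectral cutoffs onto disjoint dyadic blocks, for any multiplier one has in fact $P_jA^{(\beta)}_yP_k=A^{(\beta)}_yP_jP_k=0$ for $j\neq k$; but $A$ itself is not a multiplier, and to sum $\Vert P_jAP_k\Vert_{L^p\to L^p}$ against the weights $2^{jr}$ for \emph{every} $r\in\mathbb{R}$ you would need off-diagonal decay of unlimited order, whereas the hypothesis supplies only $l=[\frac{n}{p}]+1$ spatial derivatives --- at best a bounded range of $r$ could be reached this way. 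The paper avoids almost-orthogonality altogether: writing $Af(x)=A_xf(x)$ with $A_zf=\kappa(z,\cdot)\ast f$, it applies the Sobolev embedding in the frozen variable $z$ (this is where $l>\frac{n}{p}$ enters) to the function $z\mapsto \mathrm{Op}(\chi_m)(A_zf)(x)$, i.e.\ \emph{after} the dyadic cutoff rather than before, obtaining the key inequality
\begin{equation*}
\Vert \mathrm{Op}(a)f\Vert_{B^r_{p,q}}\lesssim\Big[\sup_{|\beta|\le l,\,z\in G}\Vert \mathrm{Op}(\partial_z^\beta a(z,\cdot))\Vert_{B(B^r_{p,q},B^r_{p,q})}\Big]\Vert f\Vert_{B^r_{p,q}},
\end{equation*}
which is inequality \eqref{extension} in the paper. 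This works precisely because each frozen operator is a multiplier and hence commutes exactly with $\mathrm{Op}(\chi_m)$, so no off-diagonal estimate is ever needed. Your frozen bounds themselves (uniform $L^p$ continuity from \cite{DR4}, lifted to uniform Besov bounds by Theorem \ref{T1}) coincide with the paper's; what is missing from your argument is this diagonal-domination lemma, which you should prove in place of the $P_jAP_k$ scheme.

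A secondary point: your reduction of the second assertion to the first via $A\Lambda^{-s}$ is workable in spirit but your justification is not, because the asymptotic composition calculus of \cite{Ruz,RWT} that you invoke presupposes smooth symbols, while here only $|\alpha|\le\varkappa$ differences and $|\beta|\le l$ derivatives are available. The repair is that right-composition with the \emph{invariant} operator $\Lambda^{-s}$ is exact, $\sigma_{A\Lambda^{-s}}(x,\xi)=a(x,\xi)\langle\xi\rangle^{-s}I_\xi$, so the claimed estimates follow from the finite Leibniz rule for the difference operators $\mathbb{D}_\xi^\alpha$ with no expansion at all. The paper takes a different route here as well: it quotes Corollary 5.1 of \cite{Ruz3}, giving $\mathrm{Op}(a):H^{m_p,p}(G)\to L^p(G)$ with $m_p=\varkappa(1-\rho)\vert\frac{1}{p}-\frac{1}{2}\vert$ for invariant symbols satisfying \eqref{cardonacondition'}, applies this to each dyadic piece, and uses that $\Lambda_{m_p}$ commutes with $\mathrm{Op}(\chi_m)$ to conclude $\Vert T_af\Vert_{B^r_{p,q}}\lesssim\Vert f\Vert_{B^{r+m_p}_{p,q}}$ directly, after which the $x$-dependent case again follows from \eqref{extension}. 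Either way, the second part is fine once the first is established --- but as it stands your proposal does not establish the first.
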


\begin{theorem}\label{T4}
Let $G$ be a compact Lie group, $n=\dim (G),$ $0\leq \rho< 1$ and $0\leq \nu<\frac{n}{2}(1-\rho).$  Let $A$ from $C^{\infty}(G)$ into $C^{\infty}(G)$ be a pseudo-differential operator with global symbol $a(x,\xi)$ satisfying
\begin{equation}
\Vert \mathbb{D}_{\xi}^{\alpha}\partial_{x}^{\beta}a(x,\xi) \Vert_{op}\leq C_{\alpha,\beta}\langle \xi\rangle^{-\nu-\rho|\alpha|}, \,\,\alpha\in\mathbb{N}^{n},|\beta|\leq l,
\end{equation}
with $1<p<\infty$ and $l=[\frac{n}{p}]+1.$ Then $A$ extends to a bounded operator from $B^{r}_{p,q}(G)$ into $B^{r}_{p,q}(G)$ for all $p$ with $|\frac{1}{p}-\frac{1}{2}|\leq \frac{\nu}{n}(1-\rho)^{-1}, $ $0< q\leq \infty$ and $r\in\mathbb{R}.$
\end{theorem}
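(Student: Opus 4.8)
The plan is to deduce the Besov bound from a uniform $L^{p}$ estimate for the ``frozen'' Fourier multipliers, together with Theorem \ref{T1}, the non-invariance being absorbed through a Sobolev embedding in the spatial variable. For fixed $y\in G$ let $A^{y}:=\mathrm{Op}(a(y,\cdot))$ be the multiplier obtained by freezing the first variable, so that $Af(x)=(A^{x}f)(x)$. The crucial observation is that for every $\beta$ with $|\beta|\le l$ the symbol $\partial_{y}^{\beta}a(y,\xi)$ obeys the \emph{same} difference estimates $\|\mathbb{D}_{\xi}^{\alpha}\partial_{y}^{\beta}a(y,\xi)\|_{op}\le C_{\alpha,\beta}\langle\xi\rangle^{-\nu-\rho|\alpha|}$ as $a$, uniformly in $y$. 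Thus everything reduces to showing that a multiplier whose symbol satisfies $\|\mathbb{D}_{\xi}^{\alpha}\sigma(\xi)\|_{op}\le C_{\alpha}\langle\xi\rangle^{-\nu-\rho|\alpha|}$ for all $\alpha\in\mathbb{N}^{n}$ is bounded on $L^{p}(G)$ in the stated range, and then upgrading this to $B^{r}_{p,q}(G)$ by Theorem \ref{T1}.

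First I would establish the core $L^{p}$ statement, the compact group analogue of Fefferman's sharp boundedness theorem for $S^{-\nu}_{\rho,0}$: a multiplier as above is bounded on $L^{p}(G)$ whenever $\nu\ge n(1-\rho)\,|\tfrac1p-\tfrac12|$, i.e. for $|\tfrac1p-\tfrac12|\le\tfrac{\nu}{n}(1-\rho)^{-1}$. I would prove it by complex interpolation of an analytic family of operators whose two endpoints are the $L^{2}$ bound at $\nu=0$ (immediate from Plancherel, since $\nu\ge0$ gives $\|\sigma(\xi)\|_{op}\le C$) and the extreme estimate at the formal exponent $\nu=\tfrac n2(1-\rho)$; the latter is the heart of the matter and would be obtained by Fefferman's dyadic decomposition of $\sigma$ in $\xi$, using the kernel estimates for the $S_{\rho}$ calculus on $G$ from \cite{RWT} in place of the Euclidean ones. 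The strict inequality $\nu<\tfrac n2(1-\rho)$ in the hypothesis keeps us at interior exponents $1<p<\infty$. It is essential here that \emph{all} differences $\alpha\in\mathbb{N}^{n}$ be controlled, and not merely the size $\|\sigma(\xi)\|_{op}\le C\langle\xi\rangle^{-\nu}$: the size information alone, which is what the rearrangement hypothesis \eqref{RAN} exploits, does not see the $\rho$-gain, whereas the difference conditions do, and it is this gain that produces the factor $(1-\rho)^{-1}$ in the admissible range.

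Next I would remove the frozen variable. Setting $F(x,y):=A^{y}f(x)$, one has $\partial_{y}^{\beta}F(x,y)=\mathrm{Op}\big(\partial_{y}^{\beta}a(y,\cdot)\big)f(x)$, so the previous step yields $\sup_{y}\|\partial_{y}^{\beta}F(\cdot,y)\|_{L^{p}}\le C\|f\|_{L^{p}}$ for every $|\beta|\le l$. Because $l=[\tfrac np]+1>\tfrac np$, the Sobolev embedding $W^{l,p}(G)\hookrightarrow L^{\infty}(G)$ in the variable $y$ controls the diagonal value $F(x,x)$, and integrating in $x$ gives
\[
\|Af\|_{L^{p}}=\Big(\int_{G}|F(x,x)|^{p}\,dx\Big)^{1/p}\le C\sum_{|\beta|\le l}\Big(\int_{G}\!\int_{G}|\partial_{y}^{\beta}F(x,y)|^{p}\,dy\,dx\Big)^{1/p}\le C\|f\|_{L^{p}},
\]
which is precisely where the hypothesis $|\beta|\le l$ enters. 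For the Besov conclusion, Theorem \ref{T1} makes each $A^{y}$ and each $\mathrm{Op}(\partial_{y}^{\beta}a(y,\cdot))$ bounded on $B^{r}_{p,q}(G)$ uniformly in $y$, and one wants to run the same Sobolev-in-$y$ scheme shell by shell on the Littlewood--Paley pieces $\|(Af)_{k}\|_{L^{p}}$.

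The main obstacle is precisely this last transfer. The spectral projections $\phi_{k}(\mathcal L)$ in the variable $x$ that define $B^{r}_{p,q}(G)$ do not commute with the diagonal restriction $y=x$, so $(Af)_{k}$ is \emph{not} obtained by first projecting $F(\cdot,y)$ in $x$ and then setting $y=x$; one cannot simply move the projection past the freezing. The remedy is an almost-orthogonality estimate of the form $\|\phi_{k}(\mathcal L)\,A\,\phi_{m}(\mathcal L)g\|_{L^{p}}\le C\,2^{-\varepsilon|k-m|}\|g\|_{L^{p}}$, whose off-diagonal decay in $|k-m|$ must be extracted from the $x$-smoothness encoded in the $l=[\tfrac np]+1$ controlled derivatives. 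Once this decay is available, summing against the $\ell^{q}$ norm in $k$ by Young's (equivalently Schur's) inequality closes the estimate and yields boundedness on $B^{r}_{p,q}(G)$ for all $0<q\le\infty$ and $r\in\mathbb{R}$.
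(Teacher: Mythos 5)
Your overall skeleton coincides with the paper's: freeze the spatial variable, obtain uniform bounds for the frozen Fourier multipliers $\mathrm{Op}(\partial_z^{\beta}a(z,\cdot))$, $|\beta|\le l=[\tfrac np]+1$, and recover the $x$-dependence through a Sobolev embedding in the frozen variable. One deviation is harmless but contentless: you propose to re-prove the Fefferman-type $L^p$ multiplier theorem on $G$ by complex interpolation and a dyadic decomposition of the symbol, whereas the paper simply invokes it as Theorem \ref{t3}, citing Delgado--Ruzhansky \cite{DR4}; since you only sketch that endpoint argument ("would be obtained by Fefferman's dyadic decomposition\dots"), you have neither proved it nor cited it, and it is by far the hardest analytic ingredient in your plan.

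The genuine gap is in your final step, the passage from uniform frozen bounds to the Besov norm of $Af$. You correctly observe that the spectral projections do not commute with the diagonal restriction $y=x$, but your remedy --- the almost-orthogonality estimate $\Vert \phi_k(\mathcal L)A\phi_m(\mathcal L)g\Vert_{L^p}\le C2^{-\varepsilon|k-m|}\Vert g\Vert_{L^p}$ plus Schur's test --- is asserted, not proved, and is doubtful as stated. The Littlewood--Paley pieces defining $B^r_{p,q}(G)$ in this paper are the \emph{sharp} cutoffs $\mathrm{Op}(\chi_m)$, and for $\dim G\ge 2$ these are not uniformly bounded on $L^p$, $p\neq 2$ (by transference from the ball-multiplier phenomenon), so even the diagonal terms $k=m$ of your Schur scheme require a cancellation between the outer projection and $A$ that you never construct; moreover, expanding $a(\cdot,\xi)$ in $x$ with only $l=[\tfrac np]+1$ controlled derivatives yields coefficient decay of order $\langle\eta\rangle^{-l}$, which your sketch never shows suffices to produce a summable gain $2^{-\varepsilon|k-m|}$ against the $d_\eta^2$-weighted growth of the dual. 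The paper needs none of this machinery: it establishes the inequality \eqref{extension}, $\Vert\mathrm{Op}(a)f\Vert_{B^r_{p,q}}\lesssim \bigl[\sup_{|\beta|\le l,\,z}\Vert\mathrm{Op}(\partial_z^\beta a(z,\cdot))\Vert_{B(B^r_{p,q},B^r_{p,q})}\bigr]\Vert f\Vert_{B^r_{p,q}}$, by taking the supremum over the frozen variable \emph{inside each shell} and applying the Sobolev embedding in $z$ there, so that the projections only ever meet the frozen operators --- which, being Fourier multipliers, commute exactly with every $\mathrm{Op}(\chi_m)$ --- and the uniform multiplier bounds then come from Theorem \ref{t3} upgraded by Theorem \ref{T1}. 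In short, where the paper reduces the theorem to exact commutation, you erect an off-diagonal summation scheme whose key lemma is missing and, with the sharp cutoffs used here, likely false without further ideas.
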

Now, we provide some remarks on our main results.  
\begin{itemize}
\item Theorems \ref{T3} and \eqref{T4} can be proved by using Theorem \eqref{T1}, and the $L^p$ boundedness theorems in \cite{DR4}. For the definition of the difference operators $\mathbb{D}_\xi^{\alpha},$  $\alpha\in\mathbb{N}^n$ (which were introduced in \cite{Ruz3}) we refer to Definition \ref{differenceoperators}.  
\item Recently in \cite{Cardona}, Theorem 1.6, the boundedness of pseudo-differential operators $A$ on every $B^{r}_{p,q}(G)$-space with symbols (of order zero) satisfying
\begin{equation}
\Vert \mathbb{D}_{\xi}^{\alpha}\partial_{x}^{\beta}a(x,\xi) \Vert_{op}\leq C_{\alpha,\beta}\langle \xi\rangle^{-|\alpha|}, \,\,|\alpha|\leq\varkappa,|\beta|\leq l,
\end{equation}
has been shown. This result has been obtained as consequence of the $L^p(G)$-boundedness of Fourier multipliers with symbols $a(\xi)$ satisfying the analogous condition
\begin{equation}
\Vert \mathbb{D}_{\xi}^{\alpha}a(\xi) \Vert_{op}\leq C_{\alpha}\langle \xi\rangle^{-|\alpha|}, \,\,|\alpha|\leq\varkappa.
\end{equation} 
\item In Theorem \ref{T3}, the condition \eqref{cardonacondition'}, generalizes the Theorem 1.6 of \cite{Cardona} (in fact, we only need to consider $\rho=1$). In the hypothesis \eqref{cardonacondition''} we do not use the usual conditions $\rho>\delta$ and $\rho \geq
1-\delta$ for the invariance under coordinates charts of the H\"ormander classes $\Psi^m_{\rho,\delta}(G)$ (see \cite{Ho}).

\item For operators with symbols whose derivatives $\mathbb{D}_{\xi}^{\alpha}a(\xi)$ are bounded by $C_{\alpha}\langle \xi\rangle^{-m-\rho|\alpha|}$ in the operator norm, $m\rightarrow 0^+,$ the $L^p$- boundedness  is valid only for finite intervals centered at $p=2,$ (c.f Delgado and Ruzhansky \cite{DR4}).  Since our Besov estimates are obtained from these $L^p$-estimates, we obtain the boundedeness of operators $A$ on $B^{r}_{p,q}(G)$ around of $p=2.$
\item It was proved by Fischer that for $\rho\geq 1-\delta,$ $\rho>\delta,$ an operator $A$ in the H\"ormander class $\Psi^m_{\rho,\delta}(G)$ has matrix valued symbol $a=\sigma_A$ satisfying
\begin{equation}
\Vert \mathbb{D}_{\xi}^{\alpha}\partial_{x}^{\beta}\sigma_A(x,\xi) \Vert_{op}\leq C_{\alpha,\beta}\langle \xi\rangle^{m-\rho|\alpha|+\delta|\beta|}, \,\,\alpha,\beta\in\mathbb{N}^n.
\end{equation}
The Bordaud result which asserts that every operator $A$ with symbol in $\Psi^m_{1,\delta}(\mathbb{R}^n),$ $0\leq \delta<1,$ is a bounded operator from $B^{r+m}_{p,q}(\mathbb{R}^n)$ into $ B^{r}_{p,q}(\mathbb{R}^n)$ implies the same result for the classes $\Psi^m_{1,\delta}(G)$. The boundedness on Besov spaces for operators associated to H\"ormander classes with $0<\rho<1$ has a different behavior to the case $\rho=1.$ In fact,  as consequence of the results in  Park \cite{BJpark}, an operator $A$ with symbol in $\Psi^m_{\rho,\delta}(\mathbb{R}^n),$ $\rho\geq \delta,$ is bounded from $B^s_{p,q}(\mathbb{R}^n)$ into $B^r_{p,q}(\mathbb{R}^n)$ under the condition $s=m+r+n(1-\rho)|\frac{1}{p}-\frac{1}{2}|$ (notice that for $\rho=1$ this result is nothing else that the Bordaud theorem). In particular if $m=0,$ then $A:B^{r+n(1-\rho)|\frac{1}{p}-\frac{1}{2}|}_{p,q}(\mathbb{R}^n)\rightarrow B^r_{p,q}(\mathbb{R}^n)$ is bounded. Again, the Fischer result mentioned above together with the Park result implies for $\rho\geq 1-\delta,$ $\rho>\delta,$ $\sigma_A\in \Psi_{\rho,\delta}^m(G)$  the boundedness of $A:B^{r+n(1-\rho)|\frac{1}{p}-\frac{1}{2}|}_{p,q}(G)\rightarrow B^r_{p,q}(G),$ for all $r\in\mathbb{R}$. The novelty of  our results is that we consider matrix valued symbols of limited regularity, and we do not impose the condition $\rho\geq 1-\delta,$ $\rho>\delta.$ 
\item Although in our main results we consider symbols with  order less than or equal to zero, these results can be extended to symbols of arbitrary order by using standard techniques. In fact, if $A:C^{\infty}(G)\rightarrow C^\infty(G)$ is a linear and bounded operator, then under any one of the following conditions
\begin{itemize}
\item $ \Vert  \mathbb{D}_{\xi}^{\alpha}\partial_{x}^\beta\sigma_{A}(x,\xi)\Vert_{op}\leq C_{\alpha}\langle \xi \rangle^{m-|\alpha|}, \text{ for all } \,\,\,\,|\alpha|\leq \varkappa,\,\,\,\,\,|\beta|\leq l,$
\item $\Vert \mathbb{D}_{\xi}^{\alpha}\partial_{x}^{\beta}\sigma_A(x,\xi) \Vert_{op}\leq C_{\alpha,\beta}\langle \xi\rangle^{m-\nu-\rho|\alpha|}, \,\,\alpha\in\mathbb{N}^{n},|\beta|\leq l,\,\,\,|\frac{1}{p}-\frac{1}{2}|\leq \frac{\nu}{n}(1-\rho)^{-1},\,\,0\leq \nu<\frac{n}{2}(1-\rho),$
\item $\Vert \mathbb{D}_{\xi}^{\alpha}\partial_{x}^{\beta}a(x,\xi) \Vert_{op}\leq C_{\alpha,\beta}\langle \xi\rangle^{m-\varkappa(1-\rho)|\frac{1}{p}-\frac{1}{2}|-\delta l-\rho|\alpha|+\delta|\beta|}, \,\,|\alpha|\leq \varkappa,|\beta|\leq l,$
\end{itemize}
the corresponding pseudo-differential operator $A$ extends to a bounded operator from $B^{r+m}_{p,q}(G)$ into $B^{r}_{p,q}(G)$ for all $r\in \mathbb{R},$ $1<p<\infty$ and $0<q\leq \infty.$ Additionally, we observe that the condition
\begin{itemize}
\item $
\Vert \mathbb{D}_{\xi}^{\alpha}\partial_{x}^{\beta}a(x,\xi) \Vert_{op}\leq C_{\alpha,\beta}\langle \xi\rangle^{m-\rho|\alpha|}, \,\,|\alpha|\leq \varkappa,|\beta|\leq l,
$
\end{itemize} assures that $A$ extends to a bounded operator from $B^{m+r+ \varkappa(1-\rho)|\frac{1}{p}-\frac{1}{2}|  }_{p,q}(G)$  into $B^{r}_{p,q}(G)$  for all $1<p<\infty,$ $0<q\leq\infty$ and $r\in\mathbb{R}$ similar to the Park result for euclidean symbols.
\end{itemize}


\section{Pseudo-differential operators on compact Lie groups}\label{Preliminaries}
\subsection{Fourier analysis and Sobolev spaces on compact Lie groups } In this section we will introduce some preliminaries on pseudo-differential operators on compact Lie groups and some of its properties on $L^p$-spaces. There are two notions of pseudo-differential operators on compact Lie groups. The first notion in the case of general manifolds (based on the idea of {\em local symbols}) and, in a much more recent context, the one of global pseudo-differential operators on compact Lie groups as defined by Ruzhansky and Turunen \cite{Ruz} (see also \cite{RuzTuru2}). We adopt this last notion for our work. We will always equip a compact Lie group with the Haar measure $\mu_{G}.$ For simplicity, we will write $\int_{G}fdx$ for $\int_{G}f(x) d\mu_{G}(x),$ $L^p(G)$ for $L^p(G,\mu_{G}),$ etc.  The following assumptions are based on the group Fourier transform
 $$  \widehat{\varphi}(\xi)=\int_{G}\varphi(x)\xi(x)^*dx,\,\,\,\,\,\,\,\,\,\,\,\,\,\,\,\, \varphi(x)=\sum_{[\xi]\in \widehat{G}}d_{\xi}\text{Tr}(\xi(x)\widehat{\varphi}(\xi)) .$$
The Peter-Weyl Theorem on $G$ implies  the Plancherel identity on $L^2(G),$
$$ \Vert \varphi \Vert_{L^2(G)}= \left(\sum_{[\xi]\in \widehat{G}}d_{\xi}\text{Tr}(\widehat{\varphi}(\xi)\widehat{\varphi}(\xi)^*) \right)^{\frac{1}{2}}=\Vert  \widehat{\varphi}\Vert_{ L^2(\widehat{G} ) } .$$
\noindent Here  $$\Vert A \Vert_{HS}=\text{Tr}(AA^*),$$ denotes the Hilbert-Schmidt norm of matrices. Any linear operator $A$ on $G$ mapping $C^{\infty}(G)$ into $\mathcal{D}'(G)$ gives rise to a {\em matrix-valued global (or full) symbol} $\sigma_{A}(x,\xi)\in \mathbb{C}^{d_\xi \times d_\xi}$ given by
\begin{equation}
\sigma_A(x,\xi)=\xi(x)^{*}(A\xi)(x),
\end{equation}
which can be understood from the distributional viewpoint. Then it can be shown that the operator $A$ can be expressed in terms of such a symbol as \cite{Ruz}
\begin{equation}Af(x)=\sum_{[\xi]\in \widehat{G}}d_{\xi}\text{Tr}[\xi(x)\sigma_A(x,\xi)\widehat{f}(\xi)]. 
\end{equation}
In this paper we use the notation $\text{Op}(\sigma_A)=A.$ $L^p(\widehat{G})$ spaces on the unitary dual can be well defined.   If $p=2,$ $L^2(\widehat{G})$ is defined by the norm  $$\Vert \Gamma \Vert^2_{L^2(\widehat{G})}=\sum_{[\xi]\in\widehat{G}}d_{\xi}\Vert \Gamma (\xi)\Vert^2_{HS}.$$ 
Now, we want to introduce Sobolev spaces and, for this, we give some basic tools. \noindent Let $\xi\in \textnormal{Rep}(G):=\cup \widehat{G}=\{\xi:[\xi]\in\widehat{G}\},$ if $x\in G$ is fixed, $\xi(x):H_{\xi}\rightarrow H_{\xi}$ is an unitary operator and $d_{\xi}:=\dim H_{\xi} <\infty.$ There exists a non-negative real number $\lambda_{[\xi]}$ depending only on the equivalence class $[\xi]\in \hat{G},$ but not on the representation $\xi,$ such that $-\mathcal{L}_{G}\xi(x)=\lambda_{[\xi]}\xi(x);$ here $\mathcal{L}_{G}$ is the Laplacian on the group $G$ (in this case, defined as the Casimir element on $G$). Let  $\langle \xi\rangle$ denote the function $\langle \xi \rangle=(1+\lambda_{[\xi]})^{\frac{1}{2}}$.  
\begin{definition}\label{sov} For every $s\in\mathbb{R},$ the {\em Sobolev space} $H^s(G)$ on the Lie group $G$ is  defined by the condition: $f\in H^s(G)$ if only if $\langle \xi \rangle^s\widehat{f}\in L^{2}(\widehat{G})$. The Sobolev space $H^{s}(G)$ is a Hilbert space endowed with the inner product $\langle f,g\rangle_{s}=\langle \Lambda_{s}f, \Lambda_{s}g\rangle_{L^{2}(G)}$, where, for every $r\in\mathbb{R}$, $\Lambda_{s}:H^r\rightarrow H^{r-s}$ is the bounded pseudo-differential operator with symbol $\langle \xi\rangle^{s}I_{\xi}.$ In $L^p$ spaces,  the $p$-Sobolev space of order $s,$ $H^{s,p}(G),$ is defined by functions satisfying
\begin{equation}
\Vert f \Vert_{H^{s,p}(G)}:=\Vert \Lambda_sf\Vert_{L^p(G)}<\infty.
\end{equation}
\end{definition}
\subsection{Differential and difference operators} In order to classify symbols by its regularity we present the usual definition of differential operators and the difference operators used introduced in \cite{Ruz3}.
\begin{definition}\label{differenceoperators} Let $(Y_{j})_{j=1}^{\text{dim}(G)}$ be a basis for the Lie algebra $\mathfrak{g}$ of $G$, and let $\partial_{j}$ be the left-invariant vector fields corresponding to $Y_j$. We define the differential operator associated to such a basis by $D_{Y_j} = \partial_{j}$ and, for every $\alpha\in\mathbb{N}^n$, the {\em differential operator} $\partial^{\alpha}_{x}$ is the one given by $\partial_x^{\alpha}=\partial_{1}^{\alpha_1}\cdots \partial_{n}^{\alpha_n}$. Now, if $\xi_{0}$ is a fixed irreducible  representation, the matrix-valued {\em difference operator} is the given by $\mathbb{D}_{\xi_0}=(\mathbb{D}_{\xi_0,i,j})_{i,j=1}^{d_{\xi_0}}=\xi_{0}(\cdot)-I_{d_{\xi_0}}$. If the representation is fixed we omit the index $\xi_0$ so that, from a sequence $\mathbb{D}_1=\mathbb{D}_{\xi_0,j_1,i_1},\cdots, \mathbb{D}_n=\mathbb{D}_{\xi_0,j_n,i_n}$ of operators of this type we define $\mathbb{D}^{\alpha}_{\xi}=\mathbb{D}_{1}^{\alpha_1}\cdots \mathbb{D}^{\alpha_n}_n$, where $\alpha\in\mathbb{N}^n$.
\end{definition}
\subsection{Besov spaces} We introduce the Besov spaces on compact Lie groups using the  Fourier transform on the group $G$ as follow.
\begin{definition}\label{besovspaces} Let $r\in\mathbb{R},$ $0\leq q<\infty$ and $0<p\leq \infty.$ If $f$ is a measurable function on $G,$ we say that $f\in B^r_{p,q}(G)$ if $f$ satisfies
\begin{equation}\label{n1}\Vert f \Vert_{B^r_{p,q}}:=\left( \sum_{m=0}^{\infty} 2^{mrq}\Vert \sum_{2^m\leq \langle \xi\rangle< 2^{m+1}}  d_{\xi}\text{Tr}[\xi(x)\widehat{f}(\xi)]\Vert^q_{L^p(G)}\right)^{\frac{1}{q}}<\infty.
\end{equation}
If $q=\infty,$ $B^r_{p,\infty}(G)$ consists of those functions $f$ satisfying
\begin{equation}\label{n2}\Vert f \Vert_{B^r_{p,\infty}}:=\sup_{m\in\mathbb{N}} 2^{mr}\Vert \sum_{2^m\leq \langle \xi\rangle < 2^{m+1}}  d_{\xi}\text{Tr}[\xi(x)\widehat{f}(\xi)]\Vert_{L^p(G)}<\infty.
\end{equation}
\end{definition}
If we denote by $\textnormal{Op}(\chi_{m})$ the Fourier multiplier associated to the symbol
$$ \chi_{m}(\eta)=1_{\{[\xi]:2^m\leq \langle \xi\rangle< 2^{m+1}\}}(\eta), $$ we also write,
\begin{equation}
\Vert f\Vert_{B^r_{p,q}}=\Vert \{2^{mr} \Vert \textnormal{Op}(\chi_{m})f \Vert_{L^p(G)}\}_{m=0}^{\infty} \Vert_{l^q(\mathbb{N})},\,\,0<p,q\leq \infty,\,r\in\mathbb{R}.
\end{equation}
\begin{remark}
For every $s\in\mathbb{R},$ $H^{s,2}(G)=H^{s}(G)=B^{r}_{2,2}(G).$ Besov spaces according to Definition \eqref{besovspaces} were introduced in \cite{RuzBesov} on compact homogeneous manifolds where, in particular, the authors obtained its embedding properties. On compact Lie groups such spaces where characterized, via representation theory, in \cite{RuzBesov2}.
\end{remark}
\begin{remark}
In connection with our  comments in the introduction, a detailed description on euclidean models and the role of Besov spaces in the context of applications we refer the reader to the book of A. Cohen \cite{Cohen}. The reference Hairer, \cite{Hairer} explains the importance of the Besov spaces in the setting of the theory of regularity structures as well as a theorem of reconstruction and some interactions with stochastic partial differential equations; on the other hand, as it was pointed out in 
 in \cite{FE} (see also references therein) several problems in signal analysis and information theory
require non-euclidean models. These models include: spheres, projective spaces and
general compact manifolds, hyperboloids and general non-compact symmetric spaces, and finally various Lie groups. In connection with these spaces it is important to study Besov spaces on compact and non-compact manifolds.
\end{remark}
\subsection{Global operators on compact homogeneous manifolds in Lebesgue spaces} Now we introduce the notion of homogeneous manifold. Let us consider a closed subgroup $K$ of $G$ and identify $M:=G/K$ as a analytic manifold in a canonical way. (In the case $K=\{e\} $ where $e$ is the identity element of the group $G,$ we identify $G/K$ with $G$). Let us denote by $\widehat{G}_0$ the subset of $\widehat{G}$ that are representations of type I with respect to the subgroup $K.$ This means that $\pi(h)(a)=a$ for all $h\in K.$ Besov spaces on homogeneous manifolds $M=G/K$ can be defined, and the Besov norms are defined as in \eqref{n1} y \eqref{n2}, but the representations $\xi$ in the sums are in $\widehat{G}_0.$
The following $L^{p}-L^{q}$-theorem will be useful in our analysis of Besov continuity for pseudo-differential operators on homogeneous manifolds (c.f. \cite{RR}).
\begin{theorem}\label{t1}
Let us consider $A:C^{\infty}(G/K)\rightarrow C^{\infty}(G/K) $ be a pseudo-differential operator operator on the compact homogeneous manifold $G/K.$ Let $n=\dim(G/K)$ and $1<p_1\leq 2\leq p_2<\infty.$ Let us assume that the (global) symbol matrix valued $a(x,\pi)$ of $A$ satisfies
\begin{equation}
\sup_{s>0}s[\mu\{ \pi\in\widehat{G}_{0}: \Vert \partial_{x}^{\beta}a(x,\pi) \Vert_{op}>s\}]^{\frac{1}{p_1}-\frac{1}{p_2}}<\infty,
\end{equation}
for all $|\beta|\leq [\frac{n}{p_1}]+1.$ Then $A$ extends to a bounded operator from $L^{p_1}(G)$ into $L^{p_2}(G).$
\end{theorem}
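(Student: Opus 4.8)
The plan is to deduce the boundedness of the non-invariant operator $A$ from the corresponding statement for Fourier multipliers (invariant operators) on $G/K$, for which the hypothesis \eqref{RAN} reads as a H\"ormander-type weak-$L^r$ condition on the symbol with $\tfrac1r=\tfrac1{p_1}-\tfrac1{p_2}$. Writing $k:=[\tfrac{n}{p_1}]+1$, I first separate the two roles of the space variable by introducing
\begin{equation*}
F(x,y):=\sum_{[\pi]\in\widehat{G}_0}d_\pi\,\text{Tr}[\pi(y)\,a(x,\pi)\widehat{f}(\pi)],
\end{equation*}
so that $Af(x)=F(x,x)$, and for each frozen $x$ the map $y\mapsto F(x,y)$ is the Fourier multiplier $A_x:=\textnormal{Op}(a(x,\cdot))$ applied to $f$. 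Differentiating in the first slot gives $\partial_{x'}^{\beta}F(x',x)=(B_{x'}f)(x)$, where $B_{x'}:=\textnormal{Op}(\partial_{x'}^{\beta}a(x',\cdot))$ is again a Fourier multiplier.

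The heart of the matter, and the main obstacle, is the invariant case: a Fourier multiplier $B=\textnormal{Op}(b)$ on $G/K$ whose symbol satisfies $\sup_{s>0}s\,[\mu\{\pi:\Vert b(\pi)\Vert_{op}>s\}]^{1/p_1-1/p_2}<\infty$ is bounded from $L^{p_1}(G/K)$ into $L^{p_2}(G/K)$ for $1<p_1\le 2\le p_2<\infty$. I would establish this by the Hausdorff--Young--Paley method. On the target side, since $p_2\ge 2$, the dual Hausdorff--Young inequality on $\widehat{G}_0$ yields $\Vert Bf\Vert_{L^{p_2}(G/K)}\le\Vert b\,\widehat{f}\Vert_{L^{p_2'}(\widehat{G}_0)}$; on the source side, one combines the Hausdorff--Young inequality $\Vert\widehat{f}\Vert_{L^{p_1'}(\widehat{G}_0)}\le\Vert f\Vert_{L^{p_1}(G/K)}$ with the Paley inequality whose weight is governed by the distribution function of $\Vert b(\pi)\Vert_{op}$ appearing in the hypothesis, and interpolates to absorb the factor $b$ into the exponent $p_2'$. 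The weak-$L^r$ bound on $b$ is precisely what makes this interpolation close, and the resulting operator norm depends only on the quantity $\sup_{s>0}s[\mu\{\cdots\}]^{1/p_1-1/p_2}$. This invariant multiplier theorem is exactly the content proved in \cite{RR}.

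Granting the invariant case, the reduction proceeds by Sobolev embedding in the frozen variable together with Minkowski's integral inequality. Since $k>\tfrac{n}{p_1}$, the embedding $H^{k,p_1}(G/K)\hookrightarrow L^\infty(G/K)$ applied to $x'\mapsto F(x',x)$ gives, for each fixed $x$,
\begin{equation*}
|Af(x)|=|F(x,x)|\le\sup_{x'}|F(x',x)|\le C\sum_{|\beta|\le k}\Big(\int_{G/K}|\partial_{x'}^{\beta}F(x',x)|^{p_1}\,dx'\Big)^{1/p_1}.
\end{equation*}
Taking the $L^{p_2}_x$ norm and using that $p_2\ge p_1$, Minkowski's integral inequality interchanges the two integrations, so that
\begin{equation*}
\Vert Af\Vert_{L^{p_2}(G/K)}\le C\sum_{|\beta|\le k}\Big(\int_{G/K}\Vert B_{x'}f\Vert_{L^{p_2}(G/K)}^{p_1}\,dx'\Big)^{1/p_1}.
\end{equation*}

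To finish, observe that \eqref{RAN} gives $\sup_{x'}\sup_{s>0}s[\mu\{\pi:\Vert\partial_{x'}^{\beta}a(x',\pi)\Vert_{op}>s\}]^{1/p_1-1/p_2}<\infty$ for every $|\beta|\le k$, so the invariant case of the previous paragraph bounds $\Vert B_{x'}f\Vert_{L^{p_2}(G/K)}\le C\Vert f\Vert_{L^{p_1}(G/K)}$ uniformly in $x'$. Since $G/K$ carries a probability measure, the $x'$-integral contributes only a factor $1$, and the estimate $\Vert Af\Vert_{L^{p_2}(G/K)}\le C\Vert f\Vert_{L^{p_1}(G/K)}$ follows. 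I expect the delicate point to be the invariant multiplier bound: verifying the Paley inequality on $\widehat{G}_0$ with the Plancherel measure and carrying out the interpolation so that the weak-type hypothesis alone controls the constant. The freezing/Sobolev reduction, by contrast, is routine once the uniform-in-$x'$ bound on the $B_{x'}$ is available.
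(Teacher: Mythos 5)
Your proposal is correct, but a comparison with ``the paper's proof'' needs a caveat: the paper does not actually prove this statement at all --- it imports it wholesale from Akylzhanov--Nursultanov--Ruzhansky \cite{RR}, and Theorem \ref{t1} appears in the preliminaries purely as a cited tool. What you have done is reconstruct, correctly, the proof from that source: the Hausdorff--Young--Paley mechanism (dual Hausdorff--Young on the $p_2\geq 2$ side, Paley inequality with weight controlled by the distribution function of $\Vert b(\pi)\Vert_{op}$, and interpolation closed by the weak-type hypothesis) for the invariant case, followed by the freezing reduction with Sobolev embedding $H^{l,p_1}\hookrightarrow L^{\infty}$, $l=[\tfrac{n}{p_1}]+1$, and Minkowski's integral inequality (valid since $p_1\leq 2\leq p_2$) to pass from $B_{x'}=\textnormal{Op}(\partial_{x'}^{\beta}a(x',\cdot))$ back to $A f(x)=F(x,x)$. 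Notably, this freezing-plus-Sobolev scheme is exactly the one the paper does carry out internally, in Besov norms, in its proof of Theorem \ref{T2}, so your reduction step is faithful to the paper's own technique even though applied here at the $L^p$ level. Two small points to make explicit: condition \eqref{RAN} must be read as uniform in $x$ (the free variable $x$ in the statement is implicitly under a supremum; this is the reading in \cite{RR}, and it is what your uniform bound on $\Vert B_{x'}\Vert_{L^{p_1}\to L^{p_2}}$ requires), and in the invariant step the $L^{p}(\widehat{G}_0)$ norms must carry the correct powers of $d_{\pi}$ and Schatten norms for the Hausdorff--Young and Paley inequalities to interpolate as claimed --- details you rightly flag as the delicate part and which are supplied in \cite{RR}.
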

The following sharp $L^p$ theorem on $G$ allow us to investigate Besov continuity for pseudo-differential operators on compact Lie groups. (c.f. Delgado and Ruzhansky \cite{DR4}).
\begin{theorem}\label{t2}
Let $G$ be a Compact Lie group, $n=\dim (G)$ and let $0\leq \rho,\leq \delta \leq 1.$ Denote by $\varkappa$ the smallest even integer larger that $\frac{n}{2}.$ Let $1<p<\infty$ and $l=[\frac{n}{p}]+1. $ Let $A:C^{\infty}(G)$ into $C^{\infty}(G)$ be a pseudo-differential operator with global symbol $a(x,\xi)$ satisfying
\begin{equation}
\Vert \mathbb{D}_{\xi}^{\alpha}\partial_{x}^{\beta}a(x,\xi) \Vert_{op}\leq C_{\alpha,\beta}\langle \xi\rangle^{-m-\rho|\alpha|+\delta|\beta|}, \,\,|\alpha|\leq \varkappa,|\beta|\leq l,
\end{equation}
with $m\geq \varkappa(1-\rho)|\frac{1}{p}-\frac{1}{2}|+\delta l.$ Then $A$ extends to a bounded operator from $L^{p}(G)$ into $L^{p}(G).$ 
\end{theorem}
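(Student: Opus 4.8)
The plan is to deduce the $L^{p}$ boundedness of the general (non-invariant) operator $A=\text{Op}(a)$ from its own \emph{invariant case}, namely the sharp Fourier multiplier theorem in which the symbol does not depend on $x$; the bridge between the two is a Sobolev embedding in the spatial variable that consumes exactly the $l=[\frac{n}{p}]+1$ allowed $x$-derivatives. First I would freeze the spatial variable: for each $y\in G$ let $A_{y}=\text{Op}(a(y,\cdot))$ be the Fourier multiplier with symbol $a(y,\xi)$, and set $F(y,x):=A_{y}f(x)$, so that $Af(x)=F(x,x)$ is the restriction of $F$ to the diagonal. For fixed $x$, the function $y\mapsto F(y,x)$ lives on the compact $n$-dimensional group $G$, and since $l>\frac{n}{p}$ the Sobolev embedding $H^{l,p}(G)\hookrightarrow L^{\infty}(G)$ applies. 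Using the equivalence of $\Vert\cdot\Vert_{H^{l,p}}$ with $\sum_{|\beta|\leq l}\Vert\partial^{\beta}\cdot\Vert_{L^{p}}$ valid for the positive integer $l$, I would estimate, for each fixed $x$,
\[
|F(x,x)|^{p}\leq \Vert F(\cdot,x)\Vert_{L^{\infty}(G)}^{p}\leq C\sum_{|\beta|\leq l}\int_{G}|\partial_{y}^{\beta}F(y,x)|^{p}\,dy,
\]
and then integrate in $x$ and apply Fubini to obtain
\[
\Vert Af\Vert_{L^{p}(G)}^{p}\leq C\sum_{|\beta|\leq l}\int_{G}\Vert \partial_{y}^{\beta}A_{y}f\Vert_{L^{p}(G)}^{p}\,dy.
\]

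The second step is to observe that, for fixed $y$, the operator $\partial_{y}^{\beta}A_{y}$ is itself a Fourier multiplier, with symbol $(\partial_{y}^{\beta}a)(y,\xi)$. By hypothesis, for every $|\beta|\leq l$ this symbol obeys
\[
\Vert \mathbb{D}_{\xi}^{\alpha}(\partial_{y}^{\beta}a)(y,\xi)\Vert_{op}\leq C_{\alpha,\beta}\langle\xi\rangle^{-m+\delta|\beta|-\rho|\alpha|}\leq C\langle\xi\rangle^{-(m-\delta l)-\rho|\alpha|},\qquad |\alpha|\leq\varkappa.
\]
Because $m\geq \varkappa(1-\rho)|\frac{1}{p}-\frac{1}{2}|+\delta l$, we have $m-\delta l\geq \varkappa(1-\rho)|\frac{1}{p}-\frac{1}{2}|$, so $(\partial_{y}^{\beta}a)(y,\cdot)$ satisfies exactly the hypothesis of the invariant case, uniformly in $y$, with constants controlled by the finitely many seminorm bounds $C_{\alpha,\beta}$. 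Invoking that invariant multiplier theorem yields $\Vert\partial_{y}^{\beta}A_{y}f\Vert_{L^{p}}\leq C\Vert f\Vert_{L^{p}}$ with a constant independent of $y$ and $\beta$. Substituting into the previous display, and using that $G$ has finite Haar measure and that the sum over $|\beta|\leq l$ is finite, the $y$-integral contributes only a constant, and I arrive at $\Vert Af\Vert_{L^{p}}\leq C\Vert f\Vert_{L^{p}}$, which is the claim.

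The routine ingredients are the equivalence of the integer-order Sobolev norm with the sum of $L^{p}$-norms of derivatives and the Fubini rearrangement, which I would dispatch quickly. The entire analytic weight of the theorem rests on the invariant multiplier estimate invoked above, and this is where I expect the main obstacle to lie: proving $L^{p}$ boundedness with the precise loss $\varkappa(1-\rho)|\frac{1}{p}-\frac{1}{2}|$ requires a dyadic Littlewood--Paley decomposition of the symbol over the shells $\langle\xi\rangle\sim 2^{j}$, a Plancherel estimate at $p=2$, and a Calderón--Zygmund / weak $(1,1)$ analysis of the convolution kernel for the endpoint order-zero case $\rho=1$, the loss for $0\leq\rho<1$ then being recovered by interpolation between the $L^{2}$ bound and the weak-type endpoint. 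Controlling the kernel through the group difference operators $\mathbb{D}_{\xi}^{\alpha}$ with $|\alpha|\leq\varkappa$ is the delicate point, and it is precisely the requirement that $\varkappa$ be an even integer larger than $\frac{n}{2}$ (so that the relevant power of $I-\mathcal{L}_{G}$ is a genuine differential operator producing summable decay) that makes these estimates work.
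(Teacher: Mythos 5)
Your proposal is correct and follows essentially the intended route: the paper itself gives no proof of Theorem \ref{t2}, quoting it from Delgado and Ruzhansky \cite{DR4}, and your reduction --- freezing the spatial variable, controlling the diagonal restriction $Af(x)=A_xf(x)$ through the Sobolev embedding $H^{l,p}(G)\hookrightarrow L^{\infty}(G)$ with $l=[\frac{n}{p}]+1$, and then applying the invariant multiplier theorem uniformly in the frozen variable after absorbing the $\delta l$ loss via $m-\delta l\geq \varkappa(1-\rho)|\frac{1}{p}-\frac{1}{2}|$ --- is precisely the argument of that source, and the same freezing-plus-Sobolev device is what drives the paper's own proofs in Section \ref{proof}. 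The one ingredient you only sketch, the sharp invariant case with loss $\varkappa(1-\rho)|\frac{1}{p}-\frac{1}{2}|$, is itself a quoted theorem here (Corollary 5.1 of \cite{Ruz3}), and your outline of it (Littlewood--Paley decomposition, weak $(1,1)$ kernel estimates via the difference operators with $\varkappa$ even, interpolation against the Plancherel bound) matches how it is actually established there, so no gap remains beyond that legitimate citation.
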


\begin{theorem}\label{t3}
Let $G$ be a compact Lie group, $n=\dim (G),$ $0\leq \rho< 1$ and $0\leq \nu<\frac{n}{2}(1-\rho).$  Let $A:C^{\infty}(G)$ into $C^{\infty}(G)$ be a pseudo-differential operator with global symbol $a(x,\xi)$ satisfying
\begin{equation}
\Vert \mathbb{D}_{\xi}^{\alpha}\partial_{x}^{\beta}\sigma(x,\xi) \Vert_{op}\leq C_{\alpha,\beta}\langle \xi\rangle^{-\nu-\rho|\alpha|+\delta|\beta|}, \,\,\alpha\in\mathbb{N}^{n},|\beta|\leq l,
\end{equation}
with $1<p<\infty$ and $l=[\frac{n}{p}]+1.$ Then $A$ extends to a bounded operator from $L^{p}(G)$ into $L^{p}(G)$  for all $p$ with  $|\frac{1}{p}-\frac{1}{2}|\leq \frac{\nu}{n}(1-\rho)^{-1}.$ 
\end{theorem}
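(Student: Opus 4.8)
The plan is to strip off the $x$-dependence, reduce to a Fourier multiplier, and then obtain the sharp range of $p$ by interpolating an elementary $L^2$ estimate against a critical-order Hardy-space endpoint, keeping all estimates uniform in the representation dimension $d_\xi$. \emph{Reduction to multipliers.} Writing $Af(x)=\sum_{[\xi]\in\widehat G}d_\xi\,\mathrm{Tr}[\xi(x)a(x,\xi)\widehat f(\xi)]$ and freezing $x$, the dependence on $x$ is governed only by the derivatives $\partial_x^\beta$ with $|\beta|\le l$. Since $l=[\frac np]+1>\frac np$, the Sobolev embedding $H^{l,p}(G)\hookrightarrow L^\infty(G)$ controls suprema in $x$ of the $x$-frozen pieces by their $\partial_x^\beta$-derivatives with $|\beta|\le l$; this is the standard mechanism (as in Theorem \ref{t2}) by which a hypothesis using only finitely many $x$-derivatives is absorbed. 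It therefore suffices to treat an invariant symbol $a(\xi)$ with $\Vert\mathbb D_\xi^\alpha a(\xi)\Vert_{op}\le C_\alpha\langle\xi\rangle^{-\nu-\rho|\alpha|}$ for every $\alpha\in\mathbb N^n$, the differences $\mathbb D_\xi^\alpha$ being those of Definition \ref{differenceoperators}.

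\emph{Base case and critical endpoint.} For $p=2$ the bound is immediate: $\nu\ge 0$ gives $\sup_\xi\Vert a(\xi)\Vert_{op}<\infty$, so the Plancherel identity yields $\Vert Af\Vert_{L^2}\le C\Vert f\Vert_{L^2}$, the case $|\frac1p-\frac12|=0$. The decisive step is the opposite endpoint: I would show that at the critical order $\nu=\frac n2(1-\rho)$ the multiplier maps $H^1(G)\to L^1(G)$ (equivalently $L^\infty(G)\to BMO(G)$). This rests on Calder\'on--Zygmund-type estimates for the convolution kernel $k$ of $a(\xi)$: decomposing $k$ dyadically over $\langle\xi\rangle\sim 2^j$, the estimates for all $\alpha$ give rapid off-diagonal decay of each piece, while the order $-\nu$ with the $\rho$-dilation controls its singularity at the identity. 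The relevant summation is calibrated by the Weyl asymptotics $\sum_{\langle\xi\rangle\le R}d_\xi^2\sim R^n$, and the slow variation of $a$ at frequency-scale $\langle\xi\rangle^{\rho}$ (a consequence of the $\rho$-gain in the difference estimates) is what lowers the critical order from the isotropic value $\frac n2$ to $\frac n2(1-\rho)$.

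\emph{Interpolation.} Making $\nu$ a continuous parameter through the Bessel potentials $\Lambda_s=(1-\mathcal L_G)^{s/2}$, I would apply Stein's analytic interpolation to the family $\Lambda_{z}A$, interpolating the order-$0$ estimate on $(L^2,L^2)$ against the critical order-$\frac n2(1-\rho)$ endpoint on $(H^1,L^1)$, together with its dual on $(L^\infty,BMO)$. Since complex interpolation gives $[H^1,L^2]_\theta=L^p$ with $|\frac1p-\frac12|=\frac{1-\theta}{2}$, and the order interpolates linearly as $(1-\theta)\frac n2(1-\rho)=n(1-\rho)|\frac1p-\frac12|$, one obtains $L^p$-boundedness exactly when $\nu\ge n(1-\rho)|\frac1p-\frac12|$, i.e. $|\frac1p-\frac12|\le\frac\nu n(1-\rho)^{-1}$; the strict hypothesis $\nu<\frac n2(1-\rho)$ keeps $p$ strictly inside $(1,\infty)$.

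\emph{Main obstacle.} The genuine difficulty lies in the endpoint: extracting the \emph{sharp} critical order $\frac n2(1-\rho)$, rather than the crude $n(1-\rho)$ produced by a bare $L^1$-kernel bound, forces a group analogue of Fefferman's second dyadic decomposition adapted to the anisotropic geometry at scale $\langle\xi\rangle^{\rho}$. Carrying this out with the noncommutative matrix-valued operators $\mathbb D_\xi^\alpha$, while keeping all kernel estimates uniform in $d_\xi$ and compatible with the singularity structure near the identity of $G$, is where the main work resides.
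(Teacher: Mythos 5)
The first thing to note is that the paper contains no proof of this statement at all: Theorem \ref{t3} appears in the preliminaries as a quoted result, attributed to Delgado and Ruzhansky \cite{DR4}, and is then used as a black box to derive the Besov continuity in Theorem \ref{T4}. So there is no in-paper argument to match your proposal against; the relevant comparison is with the cited work. That said, your outline correctly reconstructs the Fefferman-type scheme that underlies results of this kind. The reduction of $x$-dependent symbols to multipliers by freezing $x$ and absorbing the supremum through the Sobolev embedding with $l=[\frac{n}{p}]+1$ derivatives is exactly the mechanism this paper itself deploys (in the Besov setting) in Section \ref{proof}; the $L^2$ case via Plancherel is correct since $\nu\geq 0$ gives $\sup_{[\xi]}\Vert a(\xi)\Vert_{op}<\infty$; and your interpolation bookkeeping is right, including the genuinely useful observation that the strict subcriticality $\nu<\frac{n}{2}(1-\rho)$ makes the equality case $|\frac{1}{p}-\frac{1}{2}|=\frac{\nu}{n}(1-\rho)^{-1}$ an \emph{interior} interpolation point ($\theta=\frac{2\nu}{n(1-\rho)}<1$), so that even a weak-type $(1,1)$ endpoint, rather than a strong $H^1\to L^1$ bound, would suffice. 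One small point you pass over silently: the statement as printed contains an undefined $\delta$ in $\langle\xi\rangle^{-\nu-\rho|\alpha|+\delta|\beta|}$; comparison with Theorem \ref{T4} shows $\delta=0$ is intended, which is the reading your reduction uses — for $\delta>0$ the freezing step would cost $\delta l$ in the order and shrink the admissible range, so this should be said explicitly.

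The genuine gap is the one you yourself flag, and it is not a technical remainder but the entire theorem. Everything you actually carry out (Plancherel, freezing in $x$, Stein interpolation through $\Lambda_z$) is soft; the assertion that the critical-order multiplier maps $H^1(G)\to L^1(G)$ (or is weak $(1,1)$) is exactly the hard analytic content of \cite{DR4}, and your sketch of it — dyadic decomposition calibrated by $\sum_{\langle\xi\rangle\leq R}d_\xi^2\sim R^n$, anisotropic kernel estimates at scale $\langle\xi\rangle^{\rho}$, a group analogue of the second dyadic decomposition uniform in $d_\xi$ — names the right ingredients without establishing any of them. Note that the hypothesis ``for all $\alpha\in\mathbb{N}^n$'' (as opposed to $|\alpha|\leq\varkappa$ in Theorem \ref{t2}) is there precisely to feed the rapid off-diagonal kernel decay this endpoint needs, so your plan is consistent with the statement; but a plan it remains. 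You would additionally need a Hardy/BMO theory on $G$ compatible with analytic families of operators (available via the space-of-homogeneous-type framework, but not free). As submitted, the proposal routes correctly around everything except the one estimate that cannot be routed around; a complete argument must either invoke \cite{DR4}, as the paper does, or carry out that endpoint kernel analysis in full.
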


\section{Global pseudo-differential operators in Besov spaces}\label{proof}
 In this section we prove our main results. For the case of compact Lie groups, 
Our starting point is the following theorem, which gives a relationship between $L^{p}$ boundedness and Besov continuity on homogeneous compact manifolds. A Fourier multiplier on $M:=G/K$ is an operator $A=\mathrm{Op}(\sigma)$ with symbol $\sigma(\xi)$ satisfying $\sigma(\xi)_{ij}=0$ for $i>k_\xi$ or $j>k_\xi,$ $[\xi]\in \widehat{G}_0.$
\begin{theorem}
Let $M:=G/K$ be a compact homogeneous manifold and let  $A=\text{Op}(\sigma)$ be a Fourier multiplier on $M$. If $A$ is bounded from $L^{p_1}(M)$ into $L^{p_2}(M),$ then $A$ extends to a bounded operator from $B^{r}_{p_{1},q}(M)$ into $B^{r}_{p_{2},q}(M),$ for all $r\in\mathbb{R},$ $1\leq p_1,p_2\leq \infty,$ and $0<q\leq \infty.$
\end{theorem}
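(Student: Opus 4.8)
The plan is to exploit the fact that the Besov norm is assembled from the dyadic Littlewood-Paley projections $\mathrm{Op}(\chi_m)$, combined with the crucial structural feature that $A$ is a \emph{Fourier multiplier} and hence commutes with these projections. Writing $f_m:=\mathrm{Op}(\chi_m)f$ for the $m$-th frequency piece, I would control $\|Af\|_{B^r_{p_2,q}(M)}$ shell by shell, transferring the $L^{p_1}\to L^{p_2}$ hypothesis to each piece and then reassembling in $m$.

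The key step is the commutation identity $\mathrm{Op}(\chi_m)\,A=A\,\mathrm{Op}(\chi_m)$. Since $A=\mathrm{Op}(\sigma)$ has an $x$-independent symbol $\sigma(\xi)$ and $\mathrm{Op}(\chi_m)$ has the scalar symbol $\chi_m(\xi)$ cutting off to the shell $2^m\le\langle\xi\rangle<2^{m+1}$, both are Fourier multipliers on $M=G/K$; their symbols are functions of $\xi\in\widehat{G}_0$ alone, and because $\chi_m(\xi)$ acts as a scalar multiple of the identity on each block it commutes with the matrix $\sigma(\xi)$. Thus on the Fourier side the two operators multiply commutatively, giving $\mathrm{Op}(\chi_m)(Af)=Af_m$. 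Invoking the boundedness $A:L^{p_1}(M)\to L^{p_2}(M)$ then yields, for each $m$,
\[
\|\mathrm{Op}(\chi_m)(Af)\|_{L^{p_2}(M)}=\|Af_m\|_{L^{p_2}(M)}\le \|A\|_{L^{p_1}\to L^{p_2}}\,\|f_m\|_{L^{p_1}(M)}.
\]

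Finally I would assemble these per-shell bounds: multiplying by $2^{mr}$ and taking the $\ell^q$ norm in $m$ gives
\[
\|Af\|_{B^r_{p_2,q}(M)}=\big\|\{2^{mr}\|\mathrm{Op}(\chi_m)(Af)\|_{L^{p_2}}\}_m\big\|_{\ell^q}\le \|A\|_{L^{p_1}\to L^{p_2}}\,\big\|\{2^{mr}\|f_m\|_{L^{p_1}}\}_m\big\|_{\ell^q}=\|A\|_{L^{p_1}\to L^{p_2}}\,\|f\|_{B^r_{p_1,q}(M)},
\]
which is exactly the claimed bound; the case $q=\infty$ is identical with the supremum replacing the $\ell^q$ sum. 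The main obstacle is not the measure-theoretic bookkeeping but justifying the commutation rigorously: this step uses essentially that $A$ is a genuine Fourier multiplier, since for a general operator with symbol $\sigma(x,\xi)$ the $x$-dependence would obstruct commutation with the frequency cutoffs and the argument would collapse. One should also verify the identity $\mathrm{Op}(\chi_m)A=A\,\mathrm{Op}(\chi_m)$ at the level of the sums over $\widehat{G}_0$ defining operators on the homogeneous manifold, where the symbol carries the block structure $\sigma(\xi)_{ij}=0$ for $i>k_\xi$ or $j>k_\xi$; since $\chi_m$ acts as a scalar this block structure is preserved and the commutation persists.
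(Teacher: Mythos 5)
Your proposal is correct and follows essentially the same route as the paper's own proof: both commute $A=\mathrm{Op}(\sigma)$ with the dyadic projections $\mathrm{Op}(\chi_m)$ on the Fourier side (using that $\chi_m(\xi)$ acts as a scalar on each block and that $\sigma(\xi)$ respects the $k_\xi$-block structure on $\widehat{G}_0$), then apply the $L^{p_1}\to L^{p_2}$ hypothesis shell by shell and reassemble via the weighted $\ell^q$ norm, with $q=\infty$ handled by the identical computation with a supremum. The commutation step you flag as the main point to justify is exactly the chain of equalities the paper carries out inside the Besov norm, so your argument matches the published one in substance and in detail.
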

\begin{proof}
First, let us consider a multiplier operator $\text{Op}(\sigma)$ bounded from $L^{p_1}(M)$ into $L^{p_2}(M),$ and $f\in C^{\infty}(M).$ Then, we have
$ \Vert Tf\Vert_{L^{p_2}(M)} \leq C\Vert f \Vert_{L^{p_1}(M)},$
where $C=\Vert T\Vert_{B(L^{p_1},L^{p_2})}$ is the usual operator norm. We denote by $\chi_{m}(\xi)$ the characteristic function of $D_{m}:=\{ \xi\in \widehat{G}_0: 2^{m}\leq \langle \xi\rangle<2^{m+1} \}$ and $\text{Op}(\chi_{m})$ the corresponding Fourier multiplier of the symbol $\chi_{m}(\xi)I_{\xi}$. Here, $I_{\xi}:=(a_{ij})$ is the matrix in $\mathbb{C}^{d_{\xi}\times d_{\xi}},$ defined by $a_{ii}=1$ if $1\leq i\leq k_\xi$ and $a_{ij}=0$ in other case. By the definition of Besov norm, if $0<q<\infty$ we have
\begin{align*}
\Vert \text{Op}(\sigma)f \Vert^{q}_{{B^r}_{p_2,q}} &= \sum_{m=0}^{\infty} 2^{mrq}\Vert \sum_{2^m\leq \langle \xi\rangle< 2^{m+1}}  d_{\xi}\text{Tr}[\xi(x)\mathscr{F}(\text{Op}(\sigma)f)(\xi)]\Vert^q_{L^{p_2}(M)}\\
&= \sum_{m=0}^{\infty} 2^{mrq}\Vert \sum_{[\xi]\in\widehat{G}_0}  d_{\xi}\cdot \chi_{m}(\xi)\text{Tr}[\xi(x)\mathscr{F}(\text{Op}(\sigma)f)(\xi)]\Vert^q_{L^{p_2}(M)}\\
&= \sum_{m=0}^{\infty} 2^{mrq}\Vert \sum_{[\xi]\in\widehat{G}_0}  d_{\xi}\cdot \text{Tr}[\xi(x)\chi_{m}(\xi)\sigma(\xi)(\mathscr{F}f)(\xi)]\Vert^q_{L^{p_2}(M)}\\
&= \sum_{m=0}^{\infty} 2^{mrq}\Vert \sum_{[\xi]\in\widehat{G}_0}  d_{\xi}\cdot \text{Tr}[\xi(x)\sigma(\xi)\mathscr{F}(\text{Op}(\chi_{m})f)(\xi)]\Vert^q_{L^{p_2}(M)}\\
&= \sum_{m=0}^{\infty} 2^{mrq}\Vert \text{Op}(\sigma)[ (\text{Op}(\chi_{m})f)]\Vert^q_{L^{p_2}(M)}.\\
\end{align*}
By the boundedness of $\text{Op}(\sigma)$ from $L^{p_1}(M)$ into $L^{p_2}(M)$ we get,
\begin{align*}
\Vert \text{Op}(\sigma)f \Vert^{q}_{{B^r}_{p_2,q}}&\leq \sum_{m=0}^{\infty} 2^{mrq}C^q\Vert  \text{Op}(\chi_{m})f\Vert^q_{L^{p_1}(M)}\\
&= \sum_{m=0}^{\infty} 2^{mrq}C^q\Vert \sum_{[\xi]\in\widehat{G}_0}  d_{\xi}\cdot \text{Tr}[\xi(x)\chi_{m}(\xi)I_{\xi}\mathscr{F}(f)(\xi)]\Vert^q_{L^{p_1}(M)}\\
&= \sum_{m=0}^{\infty} 2^{mrq}C^q\Vert \sum_{2^m\leq \langle \xi\rangle<2^{m+1}}  d_{\xi}\cdot \text{Tr}[\xi(x)I_{\xi}\mathscr{F}(f)(\xi)]\Vert^q_{L^{p_1}(M)}\\
&= \sum_{m=0}^{\infty} 2^{mrq}C^q\Vert \sum_{2^m\leq \langle \xi\rangle<2^{m+1}}  d_{\xi}\cdot \text{Tr}[\xi(x)I_{\xi}\mathscr{F}(f)(\xi)]\Vert^q_{L^{p_1}(M)}\\
&=C^q\Vert f \Vert^{q}_{{B^r}_{p_1,q}}
\end{align*}
Hence,
\begin{align*}
\Vert \text{Op}(\sigma)f \Vert_{{B^r}_{p_2,q}} \leq C\Vert f \Vert_{{B^r}_{p_1,q}}.
\end{align*}
If $q=\infty$  we have
\begin{align*}
\Vert \text{Op}(\sigma)f \Vert_{{B^r}_{p_2,\infty}} &= \sup_{m\in\mathbb{N}} 2^{mr}\Vert \sum_{2^m\leq \langle \xi\rangle< 2^{m+1}}  d_{\xi}\text{Tr}[\xi(x)\mathscr{F}(\text{Op}(\sigma)f)(\xi)]\Vert_{L^{p_2}(M)}\\
&= \sup_{m\in\mathbb{N}} 2^{mr}\Vert \sum_{[\xi]\in\widehat{G}}  d_{\xi}\cdot \chi_{m}(\xi)\text{Tr}[\xi(x)\mathscr{F}(\text{Op}(\sigma)f)(\xi)]\Vert_{L^{p_2}(M)}\\
&= \sup_{m\in\mathbb{N}} 2^{mr}\Vert \sum_{[\xi]\in\widehat{G}}  d_{\xi}\cdot \text{Tr}[\xi(x)\chi_{m}(\xi)\sigma(\xi)(\mathscr{F}f)(\xi)]\Vert_{L^{p_2}(M)}\\
&=\sup_{m\in\mathbb{N}} 2^{mr}\Vert \sum_{[\xi]\in\widehat{G}}  d_{\xi}\cdot \text{Tr}[\xi(x)\sigma(\xi)\mathscr{F}(\text{Op}(\chi_{m})f)(\xi)]\Vert_{L^{p_2}(M)}\\
&=\sup_{m\in\mathbb{N}} 2^{mr}\Vert \text{Op}(\sigma)[ (\text{Op}(\chi_{m})f)]\Vert_{L^{p_2}(M)}.\\
\end{align*}
Newly, by using the fact that $\text{Op}(\sigma)$ is a bounded operator from $L^{p_1}(M)$ into $L^{p_2}(M)$ we have,
\begin{align*}
\Vert \text{Op}(\sigma)f \Vert_{{B^r}_{p_2,\infty}} &\leq \sup_{m\in\mathbb{N}} 2^{mr}C\Vert  \text{Op}(\chi_{m})f\Vert_{L^{p_1}(M)}\\
&= \sup_{m\in\mathbb{N}} 2^{mr}C\Vert \sum_{[\xi]\in\widehat{G}}  d_{\xi}\cdot \text{Tr}[\xi(x)\chi_{m}(\xi)I_{\xi}\mathscr{F}(f)(\xi)]\Vert_{L^{p_1}(M)}\\
&= \sup_{m\in\mathbb{N}} 2^{mr}C\Vert \sum_{2^m\leq \langle \xi\rangle<2^{m+1}}  d_{\xi}\cdot \text{Tr}[\xi(x)I_{\xi}\mathscr{F}(f)(\xi)]\Vert_{L^{p_1}(M)}\\
&= \sup_{m\in\mathbb{N}} 2^{mr}C\Vert \sum_{2^m\leq \langle \xi\rangle<2^{m+1}}  d_{\xi}\cdot \text{Tr}[\xi(x)I_{\xi}\mathscr{F}(f)(\xi)]\Vert_{L^{p_1}(M)}\\
&=C\Vert f \Vert_{{B^r}_{p_1,\infty}}.
\end{align*}
This implies that,
\begin{align*}
\Vert \text{Op}(\sigma)f \Vert_{{B^r}_{p_2,\infty}} \leq C\Vert f \Vert_{{B^r}_{p_1,\infty}}.
\end{align*}
With the last inequality we end the proof.
\end{proof}

\begin{theorem}
Let us consider $A:C^{\infty}(G/K)\rightarrow C^{\infty}(G/K) $ be a pseudo-differential operator operator on the compact homogeneous manifold $G/K.$ Let $n=\dim(G/K)$ and $1<p_1\leq 2\leq p_2<\infty.$ Let us assume that the (global)  matrix valued symbol $a(x,\pi)$ of $A$ satisfies in terms of the Plancherel measure $\mu$ on $\widehat{G}_0$ the inequality,
\begin{equation}
\sup_{s>0}s[\mu\{ \pi\in\widehat{G}_{0}: \Vert \partial_{x}^{\beta}a(x,\pi) \Vert_{op}>s\}]^{\frac{1}{p_1}-\frac{1}{p_2}}<\infty,
\end{equation}
for all $|\beta|\leq [\frac{n}{p_1}]+1.$ Then $A$ extends to a bounded operator from $B^{r}_{p_1,q}(G/K)$ into $B^{r}_{p_2,q}(G/K),$  for all $r\in\mathbb{R}$ and $0<q\leq \infty.$
\end{theorem}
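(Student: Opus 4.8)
The plan is to mirror the proof of Theorem~\ref{T1}, but with the invariant operator replaced by the non-invariant $A=\mathrm{Op}(a)$, feeding the Besov machinery with the $L^{p_1}$--$L^{p_2}$ estimate of Ruzhansky, Akylzhanov and Nursultanov (Theorem~\ref{t1}). First I would record that the hypothesis \eqref{RAN} on $a$ is inherited, uniformly in $m$, after inserting the Littlewood--Paley cut-offs. Writing $P_m=\mathrm{Op}(\chi_m)$ for the projection attached to the annulus $D_m=\{\pi\in\widehat{G}_0:2^m\le\langle\pi\rangle<2^{m+1}\}$, the right composition $AP_m$ has symbol $a(x,\pi)\chi_m(\pi)$, and since $\chi_m$ is an $x$-independent characteristic function one has $\partial_x^\beta\big(a(x,\pi)\chi_m(\pi)\big)=\chi_m(\pi)\,\partial_x^\beta a(x,\pi)$; consequently the super-level sets only shrink,
\[
\{\pi:\Vert\partial_x^\beta(a\chi_m)\Vert_{op}>s\}=D_m\cap\{\pi:\Vert\partial_x^\beta a\Vert_{op}>s\}\subseteq\{\pi:\Vert\partial_x^\beta a\Vert_{op}>s\},
\]
so the weak-type quantity in \eqref{RAN} for $a\chi_m$ is dominated by that of $a$ for every $m$. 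Theorem~\ref{t1} then yields $\Vert AP_m\Vert_{L^{p_1}(M)\to L^{p_2}(M)}\le C$ with $C$ independent of $m$.

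With this uniform bound in hand, the task reduces to controlling $\Vert Af\Vert_{B^r_{p_2,q}}=\big\Vert\{2^{mr}\Vert P_m Af\Vert_{L^{p_2}}\}_m\big\Vert_{l^q}$ by $\Vert f\Vert_{B^r_{p_1,q}}$. Decomposing $f=\sum_{m'}P_{m'}f$, I would analyse the bi-localized pieces $P_m A P_{m'}$ and aim at a near-diagonal estimate of the form
\[
\Vert P_m A P_{m'}g\Vert_{L^{p_2}(M)}\le C\,2^{-\varepsilon|m-m'|}\Vert g\Vert_{L^{p_1}(M)},\qquad \varepsilon>0,
\]
for every $m,m'$. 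On the diagonal $|m-m'|\lesssim 1$ this is exactly the uniform bound for $AP_{m'}$ established above. Off the diagonal the gain has to come from the $x$-regularity of the symbol: integrating by parts in $x$ using the admissible derivatives $\partial_x^\beta a$ with $|\beta|\le[\tfrac{n}{p_1}]+1$, one trades the separation between the annuli $D_m$ and $D_{m'}$, which is exponential in $|m-m'|$ because the scales are dyadic, for geometric decay.

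Granting the near-diagonal estimate, the proof closes by a Schur/Young argument in $l^q$: the geometric kernel $2^{-\varepsilon|m-m'|}$ absorbs the mismatch between the weights $2^{mr}$ and $2^{m'r}$ for any fixed $r\in\mathbb{R}$, so that
\[
\big\Vert\{2^{mr}\Vert P_mAf\Vert_{L^{p_2}}\}_m\big\Vert_{l^q}\le C\,\big\Vert\{2^{m'r}\Vert P_{m'}f\Vert_{L^{p_1}}\}_{m'}\big\Vert_{l^q}=C\Vert f\Vert_{B^r_{p_1,q}},
\]
uniformly for $0<q\le\infty$, the case $q=\infty$ being handled with suprema exactly as in Theorem~\ref{T1}.

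The main obstacle is precisely this near-diagonal estimate. Unlike the invariant situation of Theorem~\ref{T1}, where $P_m$ commutes with the multiplier and the Besov bound is an immediate corollary of the $L^{p_1}$--$L^{p_2}$ boundedness, here $P_m$ and $A$ do not commute, so $P_mAf\neq AP_mf$ and one must genuinely show that $A$ almost preserves the Littlewood--Paley localization. The delicate feature is that only finitely many $x$-derivatives, $|\beta|\le[\tfrac{n}{p_1}]+1$, are available, so the off-diagonal decay must be extracted from the exponential separation of dyadic scales rather than from high smoothness of the symbol in $x$.
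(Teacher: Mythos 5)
Your first step (the weak-type condition \eqref{RAN} is inherited by the symbols $a(x,\pi)\chi_m(\pi)$, so Theorem \ref{t1} gives $\Vert A\,\textnormal{Op}(\chi_m)\Vert_{L^{p_1}\to L^{p_2}}\leq C$ uniformly in $m$) is correct, but the proposal then reduces the whole theorem to the near-diagonal estimate $\Vert \textnormal{Op}(\chi_m)A\,\textnormal{Op}(\chi_{m'})\Vert_{L^{p_1}\to L^{p_2}}\lesssim 2^{-\varepsilon|m-m'|}$, which you flag as the main obstacle and leave unproved --- and under the stated hypotheses it is out of reach. The only available source of off-diagonal decay is the $x$-regularity of the symbol. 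Expanding $a(x,\pi)=\sum_{[\eta]}d_\eta\,\textnormal{Tr}[\eta(x)\widehat{a}(\eta,\pi)]$ in the spatial variable, the piece carrying $x$-frequency $\eta$ is a $d_\eta\times d_\eta$ array of bounded functions composed with Fourier multipliers, and it can shift Littlewood--Paley shells by at most a factor comparable to $\langle\eta\rangle$; hence $\textnormal{Op}(\chi_m)A\,\textnormal{Op}(\chi_{m'})$ only sees frequencies $\langle\eta\rangle\gtrsim 2^{|m-m'|}$. Since \eqref{RAN} controls only $l=[\frac{n}{p_1}]+1$ derivatives, integration by parts yields at best a factor $\langle\eta\rangle^{-l}$ per piece, while the Weyl-type count $\sum_{\langle\eta\rangle\sim 2^k}d_\eta^{2}\sim 2^{kn}$ makes the shell sums behave like $\sum_{k\geq|m-m'|}2^{k(n-l)}$, which diverges whenever $l\leq n$; and indeed $[\frac{n}{p_1}]+1\leq n$ for every $1<p_1\leq 2$. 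So your mechanism would require more than $n$ spatial derivatives, not $[\frac{n}{p_1}]+1$. A second, independent defect: even the diagonal case $|m-m'|\lesssim 1$ does not follow from your uniform bound on $A\,\textnormal{Op}(\chi_{m'})$, because you must still compose with $\textnormal{Op}(\chi_m)$ on the left, and the sharp dyadic cut-offs are not uniformly bounded on $L^{p}$, $p\neq 2$, for $n\geq 2$ (Fefferman's ball multiplier theorem plus transference); the proof of Theorem \ref{T1} never meets this issue, since for an invariant operator $\textnormal{Op}(\chi_m)$ commutes exactly with $\textnormal{Op}(\sigma)$ and is absorbed into the Besov norm of $f$.

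The paper closes the non-invariant case by an entirely different and much cheaper device, which is the idea missing from your proposal: freezing the spatial variable. One writes $Af(x)=A_xf(x)$, where $A_zf=\kappa(z,\cdot)\ast f$ is the Fourier multiplier with symbol $\sigma(z,\cdot)$, dominates the supremum over $z\in M$ of the localized quantities by $\sum_{|\beta|\leq l}$ of their $L^{p_2}$-norms in $z$ via the Sobolev embedding theorem --- this is exactly what the $l=[\frac{n}{p_1}]+1$ derivatives in \eqref{RAN} are for --- and then observes that each frozen operator $\textnormal{Op}(\partial_z^{\beta}\sigma(z,\cdot))$ is an honest Fourier multiplier satisfying \eqref{RAN}, hence bounded from $B^{r}_{p_1,q}$ into $B^{r}_{p_2,q}$ uniformly in $z$ and $\beta$ by Theorem \ref{t1} combined with Theorem \ref{T1}. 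In this way $A$ is never commuted past the Littlewood--Paley projections and no almost-orthogonality is needed. I recommend abandoning the Schur-type scheme and carrying out the non-invariant step along these lines.
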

\begin{proof}
If we assume that $A$ has symbol $\sigma(x,\pi)=\sigma(\pi)$ independent of $x\in M= G/K,$ then by Theorem \ref{t1} we have that $A$ is bounded from $L^{p_1}(M)$ into $L^{p_2}(M).$ By Theorem \ref{T1}, $A$ extends to a bounded operator from $B^{r}_{p_1,q}(M)$ into $B^{r}_{p_2,q}(M),$  for all $r\in\mathbb{R}$ and $0<q\leq \infty.$ Next, we consider the general case where $a(x,\pi)$ depends on $x.$  To do this we write for $f\in C^{\infty}(M):$
\begin{align*} Af(x)&=\sum_{[\xi]\in\widehat{G}_0}d_{\xi} \text{Tr}[\xi(x)\sigma(x,\xi)\widehat{f}(\xi)]\\
&=\int_{M }[\sum_{[\xi]\in\widehat{G}_0}d_{\xi} \text{Tr}[ \xi(y^{-1}x)\sigma(x,\xi)]]f(y)dy\\
&=\int_{M }[\sum_{[\xi]\in\widehat{G}_0}d_{\xi} \text{Tr}[ \xi(y)\sigma(x,\xi)]]f(xy^{-1})dy.\\
\end{align*}
Hence $A=\mathrm{Op}(\sigma)f(x)=(\kappa(x,\cdot)\ast f)(x),$ where
\begin{equation} \kappa(z,y)=\sum_{[\xi]\in\widehat{G}_0}d_{\xi} \text{Tr}[ \xi(y)\sigma(z,\xi)],
\end{equation}
and $\ast$ is the right convolution operator. Moreover, if we define $A_{z}f(x)=(\kappa(z,\cdot)\ast f)(x)$ for every element  $z\in M,$ we have $$A_{x}f(x)=Af(x),\,\,\,x\in M.$$ For all $0\leq |\beta|\leq [n/p]+1$ we have $\partial^{\beta}_{z}A_{z}f(x)=\text{Op}(\partial_{z}^{\beta}\sigma(z,\cdot))f(x).$ So, by the precedent argument on Fourier multipliers, for every $z\in M,$ $\partial^{\beta}_{z}A_{z}f=\text{Op}(\partial_{z}^{\beta}\sigma(z,\cdot))f$ is a bounded operator from $B^{r}_{p_1,q}(M)$ into $B^{r}_{p_2,q}$ for all  $r\in\mathbb{R}$ and $0<q\leq \infty.$ Now, we want to estimate the Besov norm of $\text{Op}(\sigma(\cdot,\cdot)).$ First, we observe that
\begin{align*}
&\left\Vert \sum_{2^m\leq \langle \xi\rangle <2^{m+1}} d_{\xi}\text{Tr}[\xi(x)\mathscr{F}(\text{Op}(\sigma)f)(\xi)] \right\Vert^p_{L^{p_2}}\\&:=\int_{M}\left|   \sum_{2^m\leq \langle \xi\rangle <2^{m+1}} d_{\xi} \text{Tr}[\xi(x)\int_{M}\text{Op}(\sigma)f(y)\xi(y)^{*}dy] \right|^{p_2}dx\\
&=\int_{M}\left|   \sum_{2^m\leq \langle \xi\rangle <2^{m+1}} d_{\xi} \text{Tr}[\xi(x)\int_{M}A_{y}f(y)\xi(y)^{*}dy] \right|^{p_2}dx\\
&\leq\int_{M}\sup_{z\in M}\left|   \sum_{2^m\leq \langle \xi\rangle <2^{m+1}} d_{\xi} \text{Tr}[\xi(x)\mathscr{F}(A_{z}f)(\xi)] \right|^{p_2}dx\\
\end{align*}
By the Sobolev embedding theorem, we have
\begin{align*}
&\sup_{z\in M} \left|  \sum_{2^m\leq \langle \xi\rangle <2^{m+1}} d_{\xi} \text{Tr}[\xi(x)\mathscr{F}(A_{z}f)(\xi)] \right|^{p_2}\\
&\lesssim  \sum_{|\beta|\leq l} \int_{M} \left|  \sum_{2^m\leq \langle \xi\rangle <2^{m+1}} d_{\xi} \text{Tr}[\xi(x)\mathscr{F}(\text{Op}(\partial^{\beta}_{z}\sigma(z,\cdot))f)(\xi)] \right|^{p_2} dz \\
&\lesssim  \sup_{|\beta|\leq l} \int_{M} \left|  \sum_{2^m\leq \langle \xi\rangle <2^{m+1}} d_{\xi} \text{Tr}[\xi(x)\mathscr{F}(\text{Op}(\partial^{\beta}_{z}\sigma(z,\cdot))f)(\xi)] \right|^{p_2} dz
\end{align*}
From this, and the Sobolev embedding theorem we have
\begin{align*}
&\int_{M} \sup_{z\in G} \left|  \sum_{2^m\leq \langle \xi\rangle <2^{m+1}} d_{\xi} \text{Tr}[\xi(x)A_{z}f(y)\xi(y)^{*}] \right|^pdy\\
&\lesssim \sum_{|\beta|\leq l}\int_{M}\int_{M} \left|  \sum_{2^m\leq \langle \xi\rangle <2^{m+1}} d_{\xi} \text{Tr}[\xi(x)\mathscr{F}(\text{Op}(\partial^{\beta}_{z}\sigma(z,\cdot))f)(\xi)] \right|^{p_2} dz dx\\
& \lesssim\sup_{|\beta|\leq l}\int_{M}\int_{M} \left|  \sum_{2^m\leq \langle \xi\rangle <2^{m+1}} d_{\xi} \text{Tr}[\xi(x)\mathscr{F}(\text{Op}(\partial^{\beta}_{z}\sigma(z,\cdot))f)(\xi)] \right|^{p_2} dx dz\\
&\leq \sup_{|\beta|\leq l,z\in M}\int_{M} \left|  \sum_{2^m\leq \langle \xi\rangle <2^{m+1}} d_{\xi} \text{Tr}[\xi(x)\mathscr{F}(\text{Op}(\partial^{\beta}_{z}\sigma(z,\cdot))f)(\xi)] \right|^{p_2} dx \\
&= \sup_{|\beta|\leq l,z\in M}\Vert \sum_{2^m\leq \langle \xi\rangle <2^{m+1}} d_{\xi} \text{Tr}[\xi(x)\mathscr{F}(\text{Op}(\partial^{\beta}_{z}\sigma(z,\cdot))f)(\xi)]   \Vert^{p_2}_{L^{p_2}}
\end{align*}
Hence,
\begin{align*} &\Vert \sum_{2^m\leq \langle \xi\rangle <2^{m+1}} d_{\xi}\text{Tr}[\xi(x)\mathscr{F}(\text{Op}(\sigma)f)(\xi)] \Vert_{L^{p_2}}\\
&\lesssim \sup_{|\beta|\leq l,z\in M}\Vert \sum_{2^m\leq \langle \xi\rangle <2^{m+1}} d_{\xi} \text{Tr}[\xi(x)\mathscr{F}(\text{Op}(\partial^{\beta}_{z}\sigma(z,\cdot))f)(\xi)]   \Vert_{L^{p_2}}
\end{align*}
Thus, considering $0<q<\infty$ we obtain
\begin{align*}
\Vert\text{Op}(\sigma)f \Vert_{B^{r}_{p_2,q}({M})}&:=\left( \sum_{m=0}^{\infty} 2^{mrq}\left\Vert \sum_{2^m\leq \langle \xi\rangle <2^{m+1}} d_{\xi}\text{Tr}[\xi(x)\mathscr{F}(\text{Op}(\sigma)f)(\xi)] \right\Vert_{L^{p_2}}^q\right)^{\frac{1}{q}}\\
&\lesssim \left( \sum_{m=0}^{\infty} 2^{mrq} \sup_{|\beta|\leq l,z\in M}\left\Vert \sum_{2^m\leq \langle \xi\rangle <2^{m+1}} d_{\xi} \text{Tr}[\xi(x)\mathscr{F}(\text{Op}(\partial^{\beta}_{z}\sigma(z,\cdot))f)(\xi)]   \right\Vert_{L^{p_2}}^q   \right)^{\frac{1}{q}}
\end{align*}
We define for every $z\in M$ the non-negative function $z\mapsto g(z)$ by
$$ g(z)= \sup_{|\beta|\leq l}\left\Vert \sum_{2^m\leq \langle \xi\rangle <2^{m+1}} d_{\xi} \text{Tr}[\xi(x)\mathscr{F}(\text{Op}(\partial^{\beta}_{z}\sigma(z,\cdot))f)(\xi)]   \right\Vert_{L^{p_2}}^q .$$
We write,
\begin{align*}
\left( \sum_{m=0}^{\infty} 2^{mrq}  \sup_{z\in M }g(z)   \right)^{\frac{1}{q}} &= \lim_{k\rightarrow\infty}\left( \sum_{m=0}^{k} 2^{mrq}   \sup_{z\in M }g(z)    \right)^{\frac{1}{q}}=\lim_{k\rightarrow\infty}\left( \sup_{z\in M} \sum_{m=0}^{k} 2^{mrq}  g(z)   \right)^{\frac{1}{q}}\\
&\leq \lim_{k\rightarrow\infty}\left( \sup_{z\in M} \sum_{m=0}^{\infty} 2^{mrq}  g(z)   \right)^{\frac{1}{q}}=\sup_{z\in M}\left(  \sum_{m=0}^{\infty} 2^{mrq}  g(z)   \right)^{\frac{1}{q}}.
\end{align*}
Hence, we can write
\begin{align*}
\Vert\text{Op}(\sigma)f \Vert_{B^{r}_{p,q}(M)} &\lesssim \left( \sum_{m=0}^{\infty} 2^{mrq} \sup_{|\beta|\leq l,z\in M}\left\Vert \sum_{2^m\leq \langle \xi\rangle <2^{m+1}} d_{\xi} \text{Tr}[\xi(x)\mathscr{F}(\text{Op}(\partial^{\beta}_{z}\sigma(z,\cdot))f)(\xi)]   \right\Vert_{L^{p_2}}^q   \right)^{\frac{1}{q}}\\
&\leq \sup_{|\beta|\leq l,z\in M}\Vert\text{Op}(\partial^{\beta}_{z}\sigma(z,\cdot))f \Vert_{B^{r}_{p_2,q}}  \\
&\leq \left[\sup_{|\beta|\leq l,z\in M}  \Vert \text{Op}(\partial^{\beta}_{z}\sigma(z,\cdot)) \Vert_{B(B^{r}_{p_1,q},B^{r}_{p_2,q})}\right]\Vert f \Vert_{B^r_{p_1,q}}.
\end{align*}
So, we deduce the boundedness of $A=\text{Op}(\sigma)$. Now, we treat of a similar way the boundedness of $\text{Op}(\sigma)$ if $q=\infty.$ In fact, from the inequality
\begin{align*} &\Vert \sum_{2^m\leq \langle \xi\rangle <2^{m+1}} d_{\xi}\text{Tr}[\xi(x)\mathscr{F}(\text{Op}(\sigma)f)(\xi)] \Vert_{L^{p_2}}\\
&\lesssim \sup_{|\beta|\leq l,z\in M}\Vert \sum_{2^m\leq \langle \xi\rangle <2^{m+1}} d_{\xi} \text{Tr}[\xi(x)\mathscr{F}(\text{Op}(\partial^{\beta}_{z}\sigma(z,\cdot))f)(\xi)]   \Vert_{L^{p_2}}
\end{align*}
we have
\begin{align*} &2^{mr}\Vert \sum_{2^m\leq \langle \xi\rangle <2^{m+1}} d_{\xi}\text{Tr}[\xi(x)\mathscr{F}(\text{Op}(\sigma)f)(\xi)] \Vert_{L^{p_2}}\\
&\lesssim 2^{mr}\sup_{|\beta|\leq l,z\in M}\Vert \sum_{2^m\leq \langle \xi\rangle <2^{m+1}} d_{\xi} \text{Tr}[\xi(x)\mathscr{F}(\text{Op}(\partial^{\beta}_{z}\sigma(z,\cdot))f)(\xi)]   \Vert_{L^{p_2}}.
\end{align*}
So we get
$$ \Vert\text{Op}(\sigma)f \Vert_{B^{r}_{p,\infty}(M)} \lesssim \left[\sup_{|\beta|\leq l,z\in M}  \Vert \text{Op}(\partial^{\beta}_{z}\sigma(z,\cdot)) \Vert_{B(B^{r}_{p_1,q},B^{r}_{p_2,q})}\right]\Vert f \Vert_{B^r_{p_1,\infty}}. $$
With the last inequality we end the proof.
\end{proof}

\begin{theorem}
Let $G$ be a Compact Lie group, $n=\dim (G)$ and let $0\leq \rho, \delta \leq 1.$ Denote by $\varkappa$ the smallest even integer larger that $\frac{n}{2}.$ Let $1<p<\infty$ and $l=[\frac{n}{p}]+1. $ Let $A:C^{\infty}(G)$ into $C^{\infty}(G)$ be a pseudo-differential operator with global symbol $a(x,\xi)$ satisfying
\begin{equation}\label{ruzhanskycondition}
\Vert \mathbb{D}_{\xi}^{\alpha}\partial_{x}^{\beta}a(x,\xi) \Vert_{op}\leq C_{\alpha,\beta}\langle \xi\rangle^{-m-\rho|\alpha|+\delta|\beta|}, \,\,|\alpha|\leq \varkappa,|\beta|\leq l,
\end{equation}
with $m\geq \varkappa(1-\rho)|\frac{1}{p}-\frac{1}{2}|+\delta l.$ Then $A$ extends to a bounded operator from $B^{r}_{p,q}(G)$ into $B^{r}_{p,q}(G)$ for all $0< q\leq \infty$ and $r\in\mathbb{R}.$ Moreover, if we assume that
\begin{equation}\label{cardonacondition}
\Vert \mathbb{D}_{\xi}^{\alpha}\partial_{x}^{\beta}a(x,\xi) \Vert_{op}\leq C_{\alpha,\beta}\langle \xi\rangle^{-\rho|\alpha|}, \,\,|\alpha|\leq \varkappa,|\beta|\leq l,
\end{equation}  then $A$ extends to a bounded operator from $B^{r + \varkappa(1-\rho)|\frac{1}{p}-\frac{1}{2}|}_{p,q}(G)$  into $B^{r }_{p,q}(G)$  for all $1<p<\infty,$ $0<q\leq\infty$ and $r\in\mathbb{R}.$
\end{theorem}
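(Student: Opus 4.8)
The plan is to reduce everything to the invariant (Fourier multiplier) situation, where Theorem \ref{T1} transfers $L^p$-boundedness to Besov-boundedness, and then to recover the full non-invariant operator by freezing the spatial variable exactly as in the proof of Theorem \ref{T2}. Throughout I write $s:=\varkappa(1-\rho)|\frac{1}{p}-\frac{1}{2}|$.

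For the first assertion, I would freeze $x=z$ and consider, for each $z\in G$ and each $\beta$ with $|\beta|\le l$, the Fourier multiplier $\mathrm{Op}(\partial_z^\beta a(z,\cdot))$. Differentiating the hypothesis \eqref{ruzhanskycondition} in $x$ and evaluating at $x=z$ gives
\begin{equation}
\|\mathbb{D}_\xi^\alpha[\partial_z^\beta a(z,\xi)]\|_{op}\le C_{\alpha,\beta}\langle\xi\rangle^{-(m-\delta|\beta|)-\rho|\alpha|},\qquad |\alpha|\le\varkappa,
\end{equation}
and since $|\beta|\le l$ and $m\ge s+\delta l$, the effective order obeys $m-\delta|\beta|\ge m-\delta l\ge s$. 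Hence each frozen symbol satisfies the hypothesis of Theorem \ref{t2} in its $x$-independent form, with constants uniform in $z$; consequently $\mathrm{Op}(\partial_z^\beta a(z,\cdot))$ is bounded on $L^p(G)$ with operator norm uniform in $z$ and in $|\beta|\le l$. By Theorem \ref{T1} these multipliers are then bounded on $B^r_{p,q}(G)$, again uniformly. Feeding this uniform bound into the freezing-plus-Sobolev-embedding argument of the proof of Theorem \ref{T2} (writing $Af(x)=A_xf(x)$ and estimating the supremum over $z$ by $\sum_{|\beta|\le l}\int_G\partial_z^\beta(\cdot)\,dz$) yields
\begin{equation}
\|Af\|_{B^r_{p,q}(G)}\lesssim\Big[\sup_{|\beta|\le l,\,z\in G}\|\mathrm{Op}(\partial_z^\beta a(z,\cdot))\|_{B(B^r_{p,q})}\Big]\|f\|_{B^r_{p,q}(G)},
\end{equation}
which is the claimed boundedness for all $0<q\le\infty$.

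For the second assertion I would reduce to the first by a lifting argument. Let $\Lambda_{-s}$ be the Fourier multiplier with symbol $\langle\xi\rangle^{-s}I_\xi$. Since $\Lambda_{-s}$ is $x$-independent, the composition $A\Lambda_{-s}$ has symbol exactly $a(x,\xi)\langle\xi\rangle^{-s}$, with no asymptotic remainder. Using the Leibniz rule for the difference operators together with the standard estimate $\|\mathbb{D}_\xi^\gamma\langle\xi\rangle^{-s}\|_{op}\lesssim\langle\xi\rangle^{-s-|\gamma|}\le\langle\xi\rangle^{-s-\rho|\gamma|}$, the hypothesis \eqref{cardonacondition} yields
\begin{equation}
\|\mathbb{D}_\xi^\alpha\partial_x^\beta[a(x,\xi)\langle\xi\rangle^{-s}]\|_{op}\le C_{\alpha,\beta}\langle\xi\rangle^{-s-\rho|\alpha|},\qquad |\alpha|\le\varkappa,\ |\beta|\le l.
\end{equation}
This places $A\Lambda_{-s}$ in the scope of the first part with $m=s$ and $\delta=0$ (the required inequality $m\ge s+\delta l$ then holds with equality), so $A\Lambda_{-s}$ is bounded on $B^r_{p,q}(G)$. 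Finally, the definition of the Besov norm and the comparison $\langle\xi\rangle^s\sim 2^{ms}$ on each dyadic block $D_m$ show that $\Lambda_s$ is an isomorphism $B^{r+s}_{p,q}(G)\to B^r_{p,q}(G)$, whence $A=(A\Lambda_{-s})\Lambda_s$ is bounded from $B^{r+s}_{p,q}(G)$ into $B^r_{p,q}(G)$.

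The main obstacle is the first step. It does not suffice to invoke the $L^p$-boundedness of the full operator $A$, since Theorem \ref{T1} transfers $L^p$-bounds to Besov-bounds only for genuine Fourier multipliers; one must instead pass through the frozen symbols and check that the loss of $\delta l$ derivatives in $x$ is exactly absorbed by the margin $m-\delta l\ge s$ in the hypothesis, so that Theorem \ref{t2} applies uniformly in $z$. The remaining spatial dependence is then controlled solely through the Sobolev embedding in the parameter $z$, which is precisely why the number of $x$-derivatives demanded in \eqref{ruzhanskycondition} is $l=[\frac{n}{p}]+1$.
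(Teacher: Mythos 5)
Your proposal is correct, and its first half is essentially the paper's own argument: freeze the spatial variable, check via \eqref{ruzhanskycondition} and the margin $m-\delta|\beta|\geq m-\delta l\geq \varkappa(1-\rho)|\frac{1}{p}-\frac{1}{2}|$ that each frozen multiplier $\mathrm{Op}(\partial_z^\beta a(z,\cdot))$ satisfies the hypothesis of Theorem \ref{t2} uniformly in $z$, transfer to Besov spaces by Theorem \ref{T1}, and absorb the $x$-dependence through the freezing-plus-Sobolev-embedding inequality \eqref{extension}. For the second assertion, however, you take a genuinely different route. The paper stays at the level of multipliers and invokes Corollary 5.1 of \cite{Ruz3}, i.e.\ the boundedness $T_a:H^{m_p,p}(G)\to L^p(G)$ with $m_p=\varkappa(1-\rho)|\frac{1}{p}-\frac{1}{2}|$, applies it blockwise inside the Besov norm using that $\Lambda_{m_p}$ commutes with the dyadic projections $\mathrm{Op}(\chi_m)$, and only afterwards extends to $x$-dependent symbols via \eqref{extension}. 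You instead compose the full non-invariant operator with $\Lambda_{-s}$, use that this composition is exact (no asymptotic remainder, since $\Lambda_{-s}$ is invariant with scalar symbol $\langle\xi\rangle^{-s}I_\xi$), verify via the difference-operator Leibniz rule that $a(x,\xi)\langle\xi\rangle^{-s}$ satisfies \eqref{ruzhanskycondition} with $m=s$ and $\delta=0$, and conclude by the first part together with the lifting isomorphism $\Lambda_s:B^{r+s}_{p,q}(G)\to B^{r}_{p,q}(G)$. What your route buys is self-containment: it recycles the first part and Theorem \ref{t2} rather than importing an additional Sobolev-to-$L^p$ theorem, and it treats the $x$-dependence in one stroke instead of multiplier-first. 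What it costs is two extra standard verifications that you state somewhat tersely: the Leibniz formula for $\mathbb{D}_\xi^\alpha$ on a product contains cross terms $(\mathbb{D}^{\alpha_1}a)(\mathbb{D}^{\alpha_2}\langle\cdot\rangle^{-s})$ of higher combined order — these only improve the decay, so your estimate stands — and, for $p\neq 2$, the lifting claim requires a uniform $L^p$-multiplier bound for $2^{-ms}\langle\xi\rangle^{s}\chi_m(\xi)$ rather than the mere pointwise comparison $\langle\xi\rangle^{s}\sim 2^{ms}$ on $D_m$; this is standard (and the paper uses the same lifting fact, $\Vert\Lambda_{m_p}f\Vert_{B^r}\lesssim\Vert f\Vert_{B^{r+m_p}}$, equally without proof), so it is a shared background fact rather than a gap.
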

\begin{proof}
If $A=\text{Op}(a)$ is a Fourier multiplier, i.e, $a(x,\xi)=a(\xi),$ by using Theorem \ref{t2} we have that $A$ is bounded operator from $L^{p_1}$ into $L^{p_2}$ and consequently $A$ extends to a bounded operator from $B^{r}_{p,q}(G)$ into $B^{r}_{p,q}(G)$ for all $0< q\leq \infty$ and $r\in\mathbb{R}.$ For the general case where $a(x,\xi)$ as in \eqref{ruzhanskycondition} depends on the spatial variable, we have, as in the previous proof that 
\begin{equation}\label{extension} \Vert\text{Op}(a)f \Vert_{B^{r}_{p,q}(G)} \lesssim \left[\sup_{|\beta|\leq l,z\in G}  \Vert \text{Op}(\partial^{\beta}_{z}a(z,\cdot)) \Vert_{B(B^r_{p,q},B^r_{p,q})}\right]\Vert f \Vert_{B^r_{p,q}}. 
\end{equation}
In fact, every multiplier $\text{Op}(\partial^{\beta}_{z}a(z,\cdot))$ is bounded on $B^{r}_{p,q}(G)$ because we only needs
\begin{equation}
\Vert\mathbb{D}^{\alpha}_{\xi}(\partial^{\beta}_{z}a(z,\xi))\Vert_{op}\lesssim\langle \xi\rangle^{-m-\rho|\alpha|},\,\,|\alpha|\leq \varkappa,\,|\beta|\leq l. 
\end{equation}
For the proof of this necessary condition, we use the fact that $m\geq \varkappa(1-\rho)|\frac{1}{p}-\frac{1}{2}|+\delta l.$ In fact,
\begin{align*}
\Vert\mathbb{D}^{\alpha}_{\xi}(\partial^{\beta}_{z}a(z,\xi))\Vert_{op}\lesssim \langle \xi\rangle^{-m-\rho|\alpha|+\delta|\beta|}\lesssim \langle \xi\rangle^{-m-\rho|\alpha|+\delta l }\lesssim \langle \xi\rangle^{-\varkappa(1-\rho)|\frac{1}{p}-\frac{1}{2}|-\rho|\alpha|}
\end{align*}
which shows the boundedness of the multiplier $(\partial^{\beta}_{z}a(z,\cdot))$ on $B^{r}_{p,q}(G).$ Since the family of operators $(\partial^{\beta}_{z}a(z,\cdot))_{z\in G}$ has norm uniformly bounded in $z$ we have,
$$ \sup_{|\beta|\leq l,z\in M}  \Vert \text{Op}(\partial^{\beta}_{z}a(z,\cdot)) \Vert_{B(B^r_{p,q},B^r_{p,q})}<\infty. $$
So, we end the proof for this case. If the symbol $a(\xi)$ satisfies \begin{equation}
\Vert \mathbb{D}_{\xi}^{\alpha}a(x,\xi) \Vert_{op}\leq C_{\alpha}\langle \xi\rangle^{-\rho|\alpha|}, \,\,|\alpha|\leq \varkappa,
\end{equation}  then the corresponding operator $T_a$ is bounded from $H^{m_p,p}(G)$ into $L^p(G),$  $m_p= \varkappa(1-\rho)|\frac{1}{p}-\frac{1}{2}|,$ (Corollary 5.1 of \cite{Ruz3}), so we have for $0<q<\infty$ the estimate
\begin{align*}
\Vert T_af \Vert^q_{B^{r}(G)}&=\sum_{l\geq 0} 2^{lrq} \Vert T_a[ (\text{Op}(\chi_{m})f)]\Vert^q_{L^{p}(G)}\\
&\lesssim \sum_{l\geq 0} 2^{lrq} \Vert  (\text{Op}(\chi_{m})f)\Vert^q_{H^{m_p,p}(g)}\\
&= \sum_{l\geq 0} 2^{lrq} \Vert \Lambda_{m_p}[ (\text{Op}(\chi_{m})f)]\Vert^q_{L^{p}(G)}= \sum_{l\geq 0} 2^{lrq} \Vert  (\text{Op}(\chi_{m})\Lambda_{m_p}f)\Vert^q_{L^{p}(M)}\\
&=\Vert \Lambda_{m_p} f \Vert^q_{B^{r}(G)}\lesssim \Vert f \Vert^q_{B^{r+m_p}(G)}.
\end{align*}
which proves the boundedness of $T_a.$ Now, we extend the boundedness result for non-invariant symbols $a(x,\xi)$ as in \eqref{cardonacondition} by using the inequality \eqref{extension}. The proof for $q=\infty$ is analogous.
\end{proof}

\begin{theorem}\label{T4}
Let $G$ be a compact Lie group, $n=\dim (G),$ $0\leq \rho< 1$ and $0\leq \nu<\frac{n}{2}(1-\rho).$  Let $A:C^{\infty}(G)$ into $C^{\infty}(G)$ be a pseudo-differential operator with global symbol $a(x,\xi)$ satisfying
\begin{equation}
\Vert \mathbb{D}_{\xi}^{\alpha}\partial_{x}^{\beta}a(x,\xi) \Vert_{op}\leq C_{\alpha,\beta}\langle \xi\rangle^{-\nu-\rho|\alpha|}, \,\,\alpha\in\mathbb{N}^{n},|\beta|\leq l,
\end{equation}
with $1<p<\infty$ and $l=[\frac{n}{p}]+1.$ Then $A$ extends to a bounded operator from $B^{r}_{p,q}(G)$ into $B^{r}_{p,q}(G)$ for all $|\frac{1}{p}-\frac{1}{2}|\leq \frac{\nu}{n}(1-\rho)^{-1}, $ $0< q\leq \infty$ and $r\in\mathbb{R}.$
\end{theorem}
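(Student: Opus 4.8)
The plan is to follow verbatim the scheme used to prove the preceding theorem, replacing the invariant $L^p$-boundedness input (Theorem \ref{t2}) by the sharp $L^p$ theorem with loss of derivatives, Theorem \ref{t3}.

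First I would dispose of the invariant case. If $a(x,\xi)=a(\xi)$ is independent of the spatial variable, then the hypothesis reads $\Vert\mathbb{D}_\xi^\alpha a(\xi)\Vert_{op} \le C_\alpha \langle \xi \rangle^{-\nu-\rho|\alpha|}$ for all $\alpha\in\mathbb{N}^n$, which is exactly the assumption of Theorem \ref{t3}. Hence $A=\mathrm{Op}(a)$ is bounded from $L^p(G)$ into $L^p(G)$ for every $p$ satisfying $|\tfrac{1}{p}-\tfrac{1}{2}| \le \tfrac{\nu}{n}(1-\rho)^{-1}$, and Theorem \ref{T1} then upgrades this to the boundedness of $A$ from $B^r_{p,q}(G)$ into $B^r_{p,q}(G)$ for every $r\in\mathbb{R}$ and $0<q\le\infty$.

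Next, for a symbol genuinely depending on $x$, I would invoke the reduction inequality \eqref{extension} already established for compact Lie groups,
\begin{equation*}
\Vert\mathrm{Op}(a)f\Vert_{B^r_{p,q}(G)} \lesssim \left[\sup_{|\beta|\le l,\, z\in G} \Vert\mathrm{Op}(\partial_z^\beta a(z,\cdot))\Vert_{B(B^r_{p,q},B^r_{p,q})}\right] \Vert f\Vert_{B^r_{p,q}(G)},
\end{equation*}
which reduces the problem to a uniform estimate for the frozen Fourier multipliers $\mathrm{Op}(\partial_z^\beta a(z,\cdot))$. For each fixed $z\in G$ and each $|\beta|\le l$, the hypothesis yields
\begin{equation*}
\Vert\mathbb{D}_\xi^\alpha(\partial_z^\beta a(z,\xi))\Vert_{op} \le C_{\alpha,\beta} \langle \xi \rangle^{-\nu-\rho|\alpha|}, \qquad \alpha\in\mathbb{N}^n,
\end{equation*}
with constants independent of $z$, since differentiating in $x$ does not alter the decay order in $\xi$. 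By the invariant case each such multiplier is bounded on $B^r_{p,q}(G)$ throughout the admissible range of $p$, and the uniformity of the constants $C_{\alpha,\beta}$ gives
\begin{equation*}
\sup_{|\beta|\le l,\, z\in G} \Vert\mathrm{Op}(\partial_z^\beta a(z,\cdot))\Vert_{B(B^r_{p,q},B^r_{p,q})} < \infty.
\end{equation*}
Combined with \eqref{extension} this establishes the claim; the endpoint $q=\infty$ is treated identically, replacing the $\ell^q$-summation by a supremum over the dyadic index $m$.

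The main obstacle is not the invariant step, which is a direct combination of Theorems \ref{t3} and \ref{T1}, but the verification that \eqref{extension} applies with control uniform in $z$. The delicate points are that the Sobolev embedding in the frozen variable $z$ consumes only $l=[\tfrac{n}{p}]+1$ derivatives $\partial_z^\beta$, matching exactly the range $|\beta|\le l$ imposed in the hypothesis, and that the full scale of difference estimates $\alpha\in\mathbb{N}^n$ is available, so that Theorem \ref{t3} applies to every frozen multiplier with the same admissible interval $|\tfrac{1}{p}-\tfrac{1}{2}|\le\tfrac{\nu}{n}(1-\rho)^{-1}$. Since all constants are uniform in $(x,\xi)$ by hypothesis, this interval is inherited intact and the argument closes.
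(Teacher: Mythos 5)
Your proposal is correct and follows essentially the same route as the paper: the invariant case is handled by combining Theorem \ref{t3} with Theorem \ref{T1}, and the $x$-dependent case is reduced via the freezing inequality \eqref{extension} to uniform bounds on the multipliers $\mathrm{Op}(\partial_z^\beta a(z,\cdot))$, which hold since the hypothesis gives the decay $\langle\xi\rangle^{-\nu-\rho|\alpha|}$ for all $\alpha\in\mathbb{N}^n$ with constants independent of $z$ for $|\beta|\le l$. Your remark on the matching of the Sobolev-embedding order $l=[\tfrac{n}{p}]+1$ with the hypothesis range $|\beta|\le l$ is exactly the point implicit in the paper's appeal to \eqref{extension}.
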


\begin{proof}
Again, if $a(\cdot,\cdot)$ is independent of the spatial variable, the Fourier multiplier $A=\textrm{Op}(a)$ is bounded on $L^{p}(G)$ as consequence of Theorem \ref{t3}. Newly, by theorem \ref{T1} we obtain that the Fourier multiplier $A$ is bounded on $B^{r}_{p,q}(G).$ We know that for $l=[\frac{n}{p}]+1$ $$ \Vert\text{Op}(a)f \Vert_{B^{r}_{p,q}(G)} \lesssim \left[\sup_{|\beta|\leq l,z\in M}  \Vert \text{Op}(\partial^{\beta}_{z}a(z,\cdot)) \Vert_{B(B^r_{p,q},B^r_{p,q})}\right]\Vert f \Vert_{B^r_{p,\infty}} $$
provide that every multiplier $\text{Op}(\partial^{\beta}_{z}a(z,\cdot))$ is bounded on $B^{r}_{p,q}(G).$ But, this it follows from the fact that
\begin{equation}
\Vert \mathbb{D}_{\xi}^{\alpha}\partial_{x}^{\beta}a(x,\xi) \Vert_{op}\leq C_{\alpha,\beta}\langle \xi\rangle^{-\nu-\rho|\alpha|}, \,\,\alpha\in\mathbb{N}^{n},|\beta|\leq l.
\end{equation}
\end{proof}
\subsection{Examples}\label{examples} Now, we consider examples of differential problems which could not treated with the classical pseudo-differential calculus (based in the notion of local symbols). We follow \cite{Ruz3}.
\begin{example}
Let us consider $G=\textnormal{SU}(2),$ and let  $\{X,Y,Z\}$ be a basis of its Lie algebra $\mathfrak{g}=\textnormal{su}(2).$ Let us consider the differential operators
\begin{itemize}
\item $\mathcal{L}_{sub}=X^2+Y^2$ (sub-Laplacian) and
\item $H:=X^2+Y^2-Z,$
\end{itemize} which are  hypoelliptic operators by  H\"ormander's sum of squares theorem. A parametrix $P$ of $\mathcal{L}_{sub}$ has matrix valued symbol $\sigma\in S^{-1}_{\frac{1}{2},0}(\textnormal{SU}(2)).$ On every coordinate chart $U\subset \mathbb{R}^3$ of $\textnormal{SU}(2),$ $\mathcal{L}_{sub}$ has a local symbol in the class $S^{-1}_{\frac{1}{2},\frac{1}{2}}(U\times \mathbb{R}^3).$ The classical H\"ormander classes on compact manifolds $M$ require the condition $\rho\geq 1-\delta$ and $\rho>\delta$ (which implies that $\rho>\frac{1}{2}$), hence such calculus cannot be used for the analysis of the sub-Laplacian. The global description of the H\"ormander classes trough Ruzhansky-Turunen calculus gives together with  Theorem \ref{T3} that $P$ is a bounded operator on $B^r_{p,q}(\textnormal{SU}(2)),$ $r\in \mathbb{R},$ $0<q\leq \infty,$ and $1< p<\infty.$ Hence, if we consider the problem
\begin{equation}
\mathcal{L}_{sub}u=f,\,\,\, f\in B^r_{p,q}(\textnormal{SU}(2))
\end{equation}
and we assume that the problem has almost one solution $u\in B^s_{p,q}(\textnormal{SU}(2)),$ we have
\begin{equation}
\Vert u \Vert_{B^{r}_{p,q}(G)}\lesssim \Vert f  \Vert_{B^{r-1+\frac{\varkappa}{2}|\frac{1}{p}-\frac{1}{2}|}_{p,q}(G)}+ \Vert u \Vert_{B^{s}_{p,q}(G)}.
\end{equation}
On the other hand, since the operator $H:=X^2+Y^2-Z$  has parametrix with symbol in $S^{-1}_{\frac{1}{2},0}(\textnormal{SU}(2)),$ we have  the estimate
\begin{equation}
\Vert u \Vert_{B^{r}_{p,q}(G)}\lesssim \Vert Hu  \Vert_{B^{r-1+\frac{\varkappa}{2}|\frac{1}{p}-\frac{1}{2}|}_{p,q}(G)}+ \Vert u \Vert_{B^{s}_{p,q}(G)},
\end{equation}
as in the sub-Laplacian case for the following differential problem
\begin{equation}
Hu=f,\,\,\,f\in B^r_{p,q}(\textnormal{SU}(2)).
\end{equation}
\end{example}
We end this section with the following example on vector fields on arbitrary compact Lie groups.

\begin{example}
Let $X$ be a real left invariant  vector field on a compact Lie group $G.$ There exists an exceptional discrete set $\mathcal{C}\subset i\mathbb{R},$ such that $X+c$ is globally hypoelliptic for all $c\notin \mathcal{C}.$ We recall that an differential operator $A:\mathscr{D}'(G)\rightarrow \mathscr{D}'(G)$ is globally hypoelliptic, if $u\in \mathscr{D}'(G),$ $Au=f,$ and $f\in C^{\infty}(G)$ implies $u\in C^{\infty}(G)$. If $G=\textnormal{SU}(2),$ $X+c$ is globally hypoelliptic  if and only if $c\notin\mathcal{C}=\frac{1}{2}i\mathbb{Z}.$  Moreover, on a compact Lie group $G,$ the inverse $P=(X+c)^{-1}$ of $X+c$ has global symbol in $S^{0}_{0,0}(G).$ As in the sub-Laplacian case, the classical pseudo-differential calculus cannot be
used for the analysis of $X+c.$ However, if we use Theorem \ref{T4}, $P$ is a bounded operator from $B^{r+\varkappa|\frac{1}{p}-\frac{1}{2}| }_{2,q}(G) $  into $B^{r}_{2,q}(G)$ for all $r\in\mathbb{R}$ and $0<q\leq \infty.$ Hence, we obtain the (sub-elliptic) estimate
\begin{equation}
\Vert u \Vert_{B^{r }_{2,q}(G)}\lesssim \Vert (X+c)u \Vert_{B^{r+\varkappa|\frac{1}{p}-\frac{1}{2}| }_{2,q}(G)}.
\end{equation}

\end{example}

\noindent \textbf{Acknowledgments:} The author is indebted with Alexander Cardona for helpful comments on an earlier draft of this paper. This project was supported by Faculty of Sciences of Universidad de los Andes,\textit{ Proyecto: Una clase de operadores pseudo-diferenciales en espacios de Besov.} 2016-1, Periodo intersemestral.

\bibliographystyle{amsplain}

\end{document}